\documentclass{ifacconf}

\usepackage[dvipsnames]{xcolor}
\definecolor{my_gray}{RGB}{128,128,128}
\definecolor{my_red}{RGB}{255,44,0}
\definecolor{my_cyan}{RGB}{135, 204, 238}
\definecolor{my_indigo}{RGB}{43, 33, 118}
\definecolor{my_rose}{RGB}{204, 102, 118}
\definecolor{my_rose_2}{RGB}{247,231,232}

\usepackage{arydshln}
\usepackage{fancyhdr}

\usepackage{amsmath,bm}
\usepackage{amsfonts}
\usepackage{mathtools}
\usepackage{tikz}
\usepackage{graphicx}
\usepackage{adjustbox}

\usepackage{pgfplots}
\pgfplotsset{compat=1.18}
\usepackage{float}
\usepgfplotslibrary{groupplots}
\newlength\fwidth
\usepackage{xcolor,dashrule}
\newcommand{\solidbox}[2][1.5em]{\textcolor{#2}{\rule[0.55ex]{#1}{1.6pt}}}
\newcommand{\dashboxx}[2][1.5em]{\textcolor{#2}{\hdashrule[0.7ex]{#1}{1.6pt}{1mm 0.5mm}}}

\makeatletter
\newcommand\bigDiamond{\mathop{\mathpalette\bigDi@mond\relax}}
\newcommand\bigDi@mond[2]{%
  \vcenter{\hbox{\m@th
      \scalebox{\ifx#1\displaystyle 2\else1.2\fi}{$#1\Diamond$}%
    }}%
}
\newcommand\bigLozenge{\mathop{\mathpalette\bigL@zenge\relax}}
\newcommand\bigL@zenge[2]{%
  \vcenter{\hbox{\m@th
      \scalebox{\ifx#1\displaystyle 2\else1.2\fi}{$#1\blacklozenge$}%
    }}%
}

\usetikzlibrary{positioning, arrows.meta, quotes, calc,fit, shadows.blur}
\tikzstyle{box} = [rounded corners = 1mm, fill = gray!2, blur shadow={shadow blur steps=100}]

\usepackage{accents}

\let\epsilon\varepsilon

\newcommand{\bbC}{{\mathbb C}}

\newcommand{\bbR}{{\mathbb R}}

\newcommand{\cH}{\mathcal{H}}

\newcommand{\cP}{\mathcal{P}}

\makeatletter
\newcommand*\RedeclareTheoremEnv[1]{%
  \expandafter\let\csname #1\endcsname\relax
  \expandafter\let\csname end#1\endcsname\relax
}

\usepackage{algorithm,algpseudocode}

\makeatletter
\@ifundefined{alg}{}{%
   
  \theoremstyle{plain}\newtheorem{alg}{Algorithm}[section]}
\makeatother

\definecolor{redBright}{RGB}{255,44,0}
\definecolor{cyanLight}{RGB}{135,204,238}
\definecolor{indigoDeep}{RGB}{43,33,118}
\definecolor{roseMuted}{RGB}{204,102,118}

\definecolor{gray050}{RGB}{249,249,249}
\definecolor{gray200}{RGB}{236,236,236}
\definecolor{gray400}{RGB}{176,176,176}
\definecolor{gray600}{RGB}{122,122,122}

\definecolor{redFirebrick}{RGB}{232,103,37}

\definecolor{colorGA}{RGB}{0,0,0}
\definecolor{colorGB}{RGB}{123,0,0}
\definecolor{colorGC}{RGB}{198,26,26}
\definecolor{colorGD}{RGB}{232,115,90}
\definecolor{colorGE}{RGB}{68,119,170}
\definecolor{colorGray}{RGB}{122,122,122}
\definecolor{colorInf}{RGB}{236,236,236}
\usepackage{natbib}
\usepackage{glossaries}

\begin{document}
\newacronym{les}{LES}{Local Exponential Stability}
\newacronym{pbh}{PBH}{Popov-Belevitch-Hautus}
\newacronym{roa}{ROA}{region of attraction}
\newacronym{cps}{CPS}{Cyber-Physical System}

\begin{frontmatter}
    \vspace*{-2.0em}
    \begin{center}
        \parbox{0.95\textwidth}{\centering \itshape \small
            \textcopyright 2026 Zeyad M. Manaa, Nathan van de Wouw, and Michelle S. Chong. This work has been accepted to IFAC World Congress 2026 for publication under a Creative Commons Licence CC-BY-NC-ND.
        }
    \end{center}

    \title{Confidentiality of Linear Control Systems with Quadratic Output Under Sensor Attacks [Extended Version]}

    \author[TUE]{Zeyad M. Manaa}
    \author[TUE]{Nathan van de Wouw}
    \author[TUE]{Michelle S. Chong}

    \address[TUE]{Department of Mechanical Engineering,
        Eindhoven University of Technology, The Netherlands\\
        e-mail: \{z.manaa; n.v.d.wouw;  m.s.t.chong\}@tue.nl.}

    \begin{abstract}
        We study the state estimation problem for linear control systems with quadratic outputs which are locally unobservable at the equilibrium. We show that, despite this inherent lack of observability, an adversary with sensor read and write capability can induce observability by injecting an appropriate signal into the measurement channel. Taking the role of an adversary, we characterize when an injected signal can or cannot induce observability and, in the successful case, construct an observer that achieves local exponential convergence of state estimates to the true states of the system. A simulation study demonstrates our results.
    \end{abstract}

    \begin{keyword}
        Cyber security networked control, observer design, nonlinear observers and filters, inducing observability, quadratic output.
    \end{keyword}
\end{frontmatter}

\section{Introduction}
Modern \glspl{cps} rely on networked sensors for monitoring and regulating the physical systems. This cyber connectivity improves functionality but also exposes new risks at the measurement layer, which can be exploited maliciously. See \citep{chong2019tutorial, dibaji2019systems} for comprehensive surveys. An adversary capable of reading and writing sensor data might be able to infer the internal state of the system. With such information, they can launch a more dangerous attack, which disrupts the normal performance of the system, while staying undetected \citep{dibaji2019systems}. This paper shows how an adversary can overcome a system's inherent lack of observability to reveal its internal state through a stealthy sensor attack.

Among other attack strategies, sensor attacks in \gls{cps} have gained substantial attention. These attacks aim to manipulate measurement data while remaining undetected. In this setting, studies by \cite{cardenas2011attacks,murguia2019model, guo2018worst} explored undetectable attack strategies. Underlying the attack strategies mentioned above is the assumption that the attacker knows the controller state. We study the validity of this assumption by examining both plant and controller state availability for linear systems with quadratic outputs. This class of systems arises in various applications. The control of quadratic outputs for such systems is studied in \citep{montenbruck2017linear}, where the authors were motivated by mechanical and Hamiltonian systems in which specific energy functions are to be regulated; see also the references therein. Related output models also arise in range-based robot localization, and terrain-aided navigation, which are discussed in \citep[and references therein]{Berkane2023}. Such measurements can be  communicated over networked sensing channels vulnerable to an adversary with sensor read and write access, motivating the confidentiality analysis for this class of systems.

In this paper, we adopt the role of the adversary by designing sensor attacks to infer the unmeasured states of the control system. We assume that the adversary can read from and write to the sensor, but has neither read nor write access to the control input. {This setting has been addressed} in the literature (see for example the work of \cite{umsonst2021confidentiality} and \cite{chong2022controller}). This problem setting is particularly interesting since the control input channel is typically assumed to be protected and inaccessible to the attacker (since the manipulation of control signals can cause immediate physical effects), while the sensing channel is often left more vulnerable. We study a linear plant with a quadratic sensor output map which is stabilized by an output-feedback-based dynamic controller. We assume that the closed-loop system exhibits inherent stability characteristics. However, the attack-free control system is unobservable at the equilibrium due to the quadratic nature of the output map. Specifically, we pose the question whether the adversary can nevertheless estimate the states of the plant and the controller. The goal of the adversary then is to induce observability by manipulating the sensor output channel while simultaneously guaranteeing the preservation of the closed-loop system's stability to remain undetected. Figure \ref{fig:high_level_problem_setup} illustrates an overview of the problem setup.

\begin{figure}
    \centering
    \resizebox{\linewidth}{!}{%
        \begin{tikzpicture}[scale=1, every node/.style={transform shape=false}]
  \pgfdeclarelayer{background}
  \pgfsetlayers{background,main}

  \def\vGap{0.8}
  \def\hStep{2}
  \def\advPos{4.3}
  \def\labelShift{0.4}
  \def\estDrop{1.2}

  \def\boxTop{1.0}
  \def\boxBottom{-2.3}
  \def\boxSep{-1.8}

  \tikzset{
  signal/.style={
  line width=0.8pt,
  -{Latex[round, length=3mm]}
  },
  block/.style={
      draw,
      rounded corners=0.8pt,
      minimum width=10mm,
      minimum height=8mm,
      line width=0.8pt,
      align=center,
      fill=white,
      font=\normalsize
    },
  container/.style={
      box,
      thick,
      rounded corners=0.8pt,
      draw=gray!80
    }
  }

  \tikzset{
    control container/.style={
        box,
        thick,
        rounded corners=0.8pt,
        draw=gray!80,
        fill=gray200!40
      },
    adversary container/.style={
        box,
        thick,
        rounded corners=0.8pt,
        draw=gray!80,
        fill=my_rose_2
      }
  }

  \node[block] (cont) at (0,0) {$\Sigma_c$};
  \node[block] (plant) at (\hStep, 0) {$\Sigma_p$};
  \node[block] (estimator) at (\advPos, 0) {$\Sigma_{\textrm{obs.}}$};

  \node[
    draw,
    circle,
    fill=white,
    minimum size=0.1,
    inner sep=1pt,
    line width=1pt
  ] (sum) at ($(plant.south)+(0,-\vGap)$) {$+$};

  \draw[signal]
  ($(cont.north)$) --
  ++(0,\labelShift) --
  ($(plant.north) + (0,\labelShift)$) --
  (plant.north);

  \draw[signal]
  (plant.south) --
  (sum.north);

  \draw[signal]
  (sum.west) --
  ($(cont.south)+(0,-\vGap)$) --
  (cont.south);

  \coordinate (a_top)  at (3.4, 0);
  \coordinate (a_drop) at (a_top |- sum);
  \draw[signal]
  (a_top) node[above, font=\normalsize]{$a$} --
  (a_drop) --
  (sum.east);

  \coordinate (y_tilde_branch) at ($(cont.south)+(0.6,-\vGap)$);
  \fill (y_tilde_branch) circle (1.5pt);

  \draw[signal]
  (y_tilde_branch) |-
  ($(estimator.south)+(0,-\estDrop)$) --
  (estimator.south);

  \draw[signal] (estimator.east) -- ++(1.2, 0)
  node[right, align=left, font=\normalsize] {Estimated\\states of both\\$\Sigma_p$ \& $\Sigma_c$};

  \begin{pgfonlayer}{background}

    \coordinate (CS_BL)  at ($(cont.west)+(-0.30,\boxBottom)$);
    \coordinate (CS_TR)  at ($(plant.east)+(0.30,\boxTop)$);
    \coordinate (CS_Sep) at ($(CS_BL |- 0,\boxSep)$);

    \fill[gray200!40]
    (CS_BL) rectangle (CS_Sep -| CS_TR);

    \fill[gray200!80]
    (CS_Sep -| CS_BL) rectangle (CS_TR);

    \draw[control container]
    (CS_BL) rectangle (CS_TR);

    \draw[thick, draw=gray!80]
    (CS_Sep -| CS_BL) -- (CS_Sep -| CS_TR);

    \node[font=\normalsize]
    at ($(CS_BL)!0.5!(CS_Sep -| CS_TR)$)
    {Control System};

    \coordinate (Adv_BL)  at ($(estimator.west)+(-0.60,\boxBottom)$);
    \coordinate (Adv_TR)  at ($(estimator.east)+(0.70,\boxTop)$);
    \coordinate (Adv_Sep) at ($(Adv_BL |- 0,\boxSep)$);

    \fill[my_rose_2!50, fill opacity=0.8]
    (Adv_BL) rectangle (Adv_Sep -| Adv_TR);

    \fill[my_rose_2!80, fill opacity=0.2]
    (Adv_Sep -| Adv_BL) rectangle (Adv_TR);

    \draw[adversary container]
    (Adv_BL) rectangle (Adv_TR);

    \draw[thick, draw=gray!80]
    (Adv_Sep -| Adv_BL) -- (Adv_Sep -| Adv_TR);

    \node[font=\normalsize]
    at ($(Adv_BL)!0.5!(Adv_Sep -| Adv_TR)$)
    {Adversary};

  \end{pgfonlayer}
\end{tikzpicture}
    }
    \caption{Block diagram of the closed-loop system with controller \(\Sigma_c\) and plant \(\Sigma_p\), subject to an adversarial sensor attack \(a\). The adversary uses the probed measurements to construct an estimate of both \(\Sigma_p\) and \(\Sigma_c\) states.}
    \label{fig:high_level_problem_setup}
\end{figure}

Existing studies address related but distinct problems. For example, \cite{chong2022controller} studies general nonlinear systems that are originally semi-globally asymptotically stable. They show that if the system is detectable, an adversary can passively estimate both the plant and the controller state. On the other hand, if the plant is not detectable, the core approach in that work employs a time-shared probing strategy. Herein, stealthiness is maintained by periodically injecting an observability-inducing attack signal through the sensor channel for a sufficiently long duration, to reconstruct the controller’s state, and then turning it off so that the closed-loop system remains semi-globally practically stable thereby allowing the adversary to avoid detection. While this framework demonstrates the feasibility of breaching controller confidentiality under such attacks, it does not provide a constructive procedure for designing the attack signal.

As described earlier, the attack design problem can be viewed as the problem of input design to induce observability. This general problem has been studied in~\cite{Berkane2023}, who develop a Kalman-type observer for linear systems with multiple quadratic outputs. They derive conditions on the input signal to induce uniform observability, which, in turn, allows a Kalman-like observer with global exponential convergence of the estimation error. Although not related to attack design, this approach represents a possible strategy that could be leveraged to address the state estimation problem discussed earlier. However, the results of~\cite{Berkane2023} impose excitation conditions only to guarantee uniform observability. They do not constrain the input signal so that the resulting closed-loop dynamics remain stable. Similarly, the work by \cite{nesic2000output} on Wiener systems is also of important relevance. Although it is framed as a stabilization problem (with a three-mode controller), its first two modes require an implicit state reconstruction step before the final stabilization mode takes place. Also, as in the previous case, stability is not a required property for the state reconstruction step.

Our work is distinct from these approaches as we present a \textit{systematic method to design an attack signal}, for a specific class of nonlinear systems, which \textit{acts only on the sensor output channel} and that \textit{preserves the closed-loop stability}. The attacker has no access to the control input and aims to estimate the unmeasured states of the plant and controller. To this end, the attacker designs an observer with local exponential convergence properties while also ensuring that the designed attack preserves closed-loop stability to avoid detection. This systematic attack design procedure therefore elucidates the inherent trade-off between estimation performance and stealthiness.

In this paper, we provide rigorous analysis to support the design of an additive attack signal, using the measured output and a computable gradient of the quadratic sensor map. We identify the conditions under which such an injected signal recovers local observability, i.e., the linearized system at the equilibrium is observable. This attack-induced observability property enables the design of a Luenberger-type observer that is capable of estimating both the plant and controller states from the compromised measurements. Since the designed attack signal is added to the closed-loop system, it can reveal the attacker’s presence by compromising stability or triggering anomaly detectors. To remain stealthy, the proposed design ensures that the induced attack preserves the inherent stability of the closed-loop system.

The main contributions of this paper are as follows:
\begin{itemize}
    \item We show that a closed-loop system with a quadratic sensor output map, though locally unobservable at the equilibrium, can be rendered observable through a properly designed, additive attack signal while preserving the closed-loop stability of the system at hand.
    \item We characterize the conditions under which an injected signal can or cannot induce observability.
    \item We design a Luenberger-type observer that allows the adversary to estimate both plant and controller states with guaranteed local exponential convergence of the state estimate and an explicit estimate of the \gls{roa} for both the state and the estimation error.
\end{itemize}

The remainder of the paper is organized as follows. Section \ref{sec:problem_formulation} formulates the problem and defines the adversary’s objectives. Next, Section \ref{sec:inducing_observability} presents the conditions and procedure for inducing local observability. Section \ref{sec:observer} introduces the observer design, analyzes the preservation of stability, and derives the \gls{roa} for both the state and the estimation error. Section \ref{sec:example} demonstrates the results with an illustrative example. Finally, Section \ref{sec:conc} concludes the paper. Before starting Section \ref{sec:problem_formulation}, we first introduce the notation used in this paper.

We denote the set of real (complex) numbers by $\bbR$ ($\bbC$). The set of real (complex) vectors of dimension $n$ is denoted as $\bbR^n$ ($\bbC^n$). By ${(\cdot)}^\top$, we denote the transpose of a vector or a matrix. In addition, we use $I_n$ to denote the identity matrix of dimension $n$. The spectrum of a matrix \(A\) is represented by \(\sigma(A)\). If \(A\) is a symmetric matrix, the notation \(A \succ 0\) (resp., \(A \succeq 0\)) and \(A \prec 0\) (resp., \(A \preceq 0\)) means that \(A\) is positive definite (resp., positive semidefinite) and negative definite (resp., negative semidefinite), respectively. {A system $\dot{x}=f(x)$ is {locally exponentially stable} (LES) if $\exists \Delta, c, k > 0$ such that $ \|x(t)\| \leq k e^{-ct}\|x(0)\|, \; \forall t \geq 0, \ \forall \|x(0)\| \leq \Delta.$}

\section{Problem Formulation} \label{sec:problem_formulation}

\subsection{Problem setup and challenges}
\label{sec:problem_formulation_challenge}
We consider a linear plant $\Sigma_p$ with state \(x_p \in \bbR^{n_p}\),
scalar output $y$, and a quadratic sensor output map with \(Q_p=Q_p^\top\):
\begin{align}\label{eq:plant}
  \Sigma_p: & \quad
  \begin{cases}
    \dot{x}_p = A_p x_p + B_p u, \\
    y = x_p^\top Q_p x_p
  \end{cases}
\end{align}
with a controller $\Sigma_c$, defined by
\begin{align} \label{eq:controller}
  \Sigma_c: & \quad
  \begin{cases}
    \dot{x}_c = A_c x_c + B_c y, \\
    u = C_c x_c + D_c y,
  \end{cases}
\end{align}
where \(x_c \in \bbR^{n_c}\) is the controller state.
We write the closed-loop system consisting of $\Sigma_p$ and $\Sigma_c$ in feedback compactly by introducing $z:= [{x}_p^\top, {x}_c^\top]^\top \in \bbR^{n}$, where $n = n_p + n_c$, which has the following dynamics:
\begin{equation} \label{eq:sys_dynamics_clean}
  \dot{z} = Az + Bh(z) =: f(z), \qquad h(z) = z^\top Q z
\end{equation}
where \(A:=\begin{bsmallmatrix} A_p & B_pC_c \\ {0} & A_c \end{bsmallmatrix}\), \({B}:=\begin{bsmallmatrix} B_p D_c \\ B_c \end{bsmallmatrix} \) and \(Q := \begin{bsmallmatrix}
  Q_p & {0}\\ {0} & {0}
\end{bsmallmatrix}.\)

In this paper, we adopt the following assumption.

\begin{assumption}[Stability] \label{as:system_stability}
  The  equilibrium ${z}={0}$ of the closed-loop system \eqref{eq:sys_dynamics_clean} is LES.
\end{assumption}

The challenge of estimating the state $z$ of system \eqref{eq:sys_dynamics_clean} lies in the quadratic nature of the output map $h(z)$, which renders the system \eqref{eq:sys_dynamics_clean} locally unobservable at the origin. In particular, the linearized system at the origin defined by the pair $(F,H)$ below,
\begingroup
\begin{equation} \label{eq:lin_dynamics}
  \hspace{-1em}
  F := \frac{\partial f(z)}{\partial z}\bigg|_{z=0} = A, \quad
  H := \frac{\partial h}{\partial z}\bigg|_{z=0} = (2z^\top Q )|_{z=0} = 0 \hspace{-0.5em}
\end{equation}
\endgroup
is not observable. This renders observers which rely on local observability such as the Kazantzis-Kravaris-Luenberger (KKL) observers \citep{Kazantzis1998,luenberger1966observers} ineffective.

\begin{remark} \label{rem:system_stability}
  Equation \eqref{eq:lin_dynamics} shows that Assumption~\ref{as:system_stability} holds if and only if $A$ is Hurwitz.
\end{remark}

\subsection{The proposed approach}
Given the fact that the origin of the closed-loop system \eqref{eq:sys_dynamics_clean} is LES by design as stated in Assumption \ref{as:system_stability}, one may consider estimating the states $z$ of the closed-loop system \eqref{eq:sys_dynamics_clean} with an open-loop observer. The primary drawback of this approach is that the convergence rate cannot be tuned with an open-loop observer (i.e., a plant-based predictor). However, the adversary is not a passive intruder, but can also manipulate the sensor measurements via the injected attack signal $a$. Thus, using model knowledge, the adversary can potentially induce local observability of the closed-loop system \eqref{eq:sys_dynamics_clean}. This, in turn, allows for more flexible observer design in terms of the tunable convergence rate of the estimation error. In this case, the feedback received by the controller is no longer $y$ but rather given by
\begin{align}\label{eqn:corrupted_signal}
  \tilde{y} = y + a.
\end{align}
The controller $\Sigma_c$ therefore becomes
\begin{align*}
  \dot{{x}}_c = A_c {x}_c + B_c \tilde{y}, \quad u = C_c {x}_c + D_c \tilde{y}.
\end{align*}
Substituting this compromised control law into the plant dynamics yields the following closed-loop system:
\begin{align}
  \dot{{z}} = A {z} + {B} (h({z}) + a)\label{eq:system_dyn} \;=:\tilde{f}(z,a), \quad h({z}) = {z}^\top Q {z}.
\end{align}
The challenge of designing an attack signal $a$ such that \eqref{eq:system_dyn} is rendered locally observable is addressed in this paper. In the following subsection, we discuss, in more detail, the adversary's model.

\subsection{Adversary capabilities and objectives} \label{sec:adversary_model}

We assume that the adversary knows the plant and controller models but does not know the initial conditions of either the plant state, \(x_p(0)\), or the controller state, \(x_c(0)\). The adversary can {eavesdrop on the} sensor measurements and manipulate them; however, the adversary does not have access to the control input \(u\), nor can he/she alter it. Finally, since the adversary knows the output map of the system and the system parameters, the output gradient can be computed using the known model and an estimate of the system's state. In summary, the adversary operates under the following conditions.

\begin{assumption}[Adversary's Capabilities]
  The adversary
  \begin{enumerate}\label{as:adversary_model}
    \item knows the system model $(A, {B}, Q)$,
    \item has access to the true system output $h(z) = {z}^\top Q {z}$ and can manipulate it using the attack signal $a$, leading to the perturbed output $\tilde{y}$ in \eqref{eqn:corrupted_signal},
    \item estimates the output map's gradient using $\nabla h(\hat z)=2Q \hat z$, where \(Q\) is known and \(\hat z\) is an estimate of the state of system \eqref{eq:system_dyn},
    \item does not know the control input $u$, nor the plant and controller initial conditions $x_p(0),\; x_c(0)$, respectively.
  \end{enumerate}
\end{assumption}

Under Assumptions \ref{as:system_stability} and \ref{as:adversary_model}, we take the role of the adversary aiming to infer the states of the closed-loop control system \eqref{eq:system_dyn} consisting of the plant \(\Sigma_p\) and controller \(\Sigma_c\).  This problem setup is illustrated in detail in Figure~\ref{fig:problem_setup}. The adversary’s goals are then formalized as follows

\begin{objective}[Inducing Observability]
  \label{obj:1}
  Design the injected signal \(a\) using the measured output \(h(z)\) and the computable gradient \(\nabla h(\hat z)=2Q\hat z\), such that it renders the system \eqref{eq:system_dyn} locally observable at $z=0$, i.e., {the linearization of the closed-loop system subject to the injected signal $a$ is observable.}
\end{objective}

\begin{objective}[State Observer Design]
  \label{obj:2}
  Design a state observer, whose parameters can be tuned, to obtain an estimate $\hat{z}$ of the state $z$, so that the origin is a locally exponentially stable equilibrium point of the estimation error ($z(t) - \hat{z}(t)$) dynamics,
  with rate $\alpha > 0$; i.e.,
  $
    \|z(t) - \hat{z}(t)\| \le \kappa e^{-\alpha t}\|z(0) - \hat{z}(0)\|
  $
  {for some constant $\kappa > 0$, for all trajectories \(z\) and \(\hat{z}\)} {starting from initial conditions \(z(0)\) and \(\hat{z}(0)\) in a neighborhood of the origin.}
\end{objective}

\begin{objective}[Closed-loop stability]
  \label{obj:3}
  Design the attack signal \(a\) such that the origin remains a locally exponentially stable equilibrium of the closed-loop system \eqref{eq:system_dyn}.
\end{objective}

Objective \ref{obj:3} ensures that the adversary remains hidden, as causing the system to become unstable would trigger anomaly detectors. This objective is related to the stealth property of the adversary.

\begin{figure}
  \centering
  \resizebox{\linewidth}{!}{%
    \begin{tikzpicture}[scale=1, every node/.style={transform shape=false}]
  \pgfdeclarelayer{background}
  \pgfsetlayers{background,main}

  \def\vGap{0.8}
  \def\hStep{2.5}
  \def\attackPos{5.55}
  \def\advPos{7.75}
  \def\labelShift{0.4}
  \def\estDrop{1.2}
  \def\zHeight{0.4}

  \def\boxTop{1.85}
  \def\boxBottom{-2.85}
  \def\boxSep{-2.35}

  \tikzset{
  signal/.style={
  line width=0.8pt,
  -{Latex[round, length=3mm]}
  },
  block/.style={
      draw,
      rounded corners=0.8pt,
      minimum width=13mm,
      minimum height=20mm,
      line width=0.8pt,
      align=center,
      fill=white,
      font=\normalsize
    },
  container/.style={
      box,
      thick,
      rounded corners=0.8pt,
      draw=gray!80
    }
  }

  \tikzset{
    control container/.style={
        box,
        thick,
        rounded corners=0.8pt,
        draw=gray!80,
        fill=gray200!40
      },
    adversary container/.style={
        box,
        thick,
        rounded corners=0.8pt,
        draw=gray!80,
        fill=my_rose_2
      }
  }

  \node[block] (cont) at (0,0)
  {$\Sigma_c$: Eq. \eqref{eq:controller}};

  \node[block] (plant) at (\hStep,0)
  {$\Sigma_p$: Eq. \eqref{eq:plant}};

  \node[block] (attack) at (\attackPos,0)
  {$a\left(\hat{z}, \nabla h(\hat z) \right)$: \\
    Eq. \eqref{eq:attack_signal}};

  \node[block] (estimator) at (\advPos,0)
  {$\Sigma_{\textrm{obs.}}$};

  \node[
    draw,
    circle,
    fill=white,
    minimum size=0.1,
    inner sep=1pt,
    line width=1pt
  ] (sum) at ($(plant.south)+(0,-\vGap)$) {$+$};

  \draw[signal]
  (cont.north) --
  ++(0,\labelShift) --
  node[pos=0.55, above] {$u$}
  ($(plant.north)+(0,\labelShift)$) --
  (plant.north);

  \draw[signal]
  (plant.south) --
  node[pos=0.35, left, xshift=-2pt] {$y$}
  (sum.north);

  \draw[signal]
  (sum.west) --
  node[pos=0.5, above] {$\tilde{y}$}
  ($(cont.south)+(0,-\vGap)$) --
  (cont.south);

  \coordinate (aDrop) at (attack.south |- sum.east);

  \draw[signal]
  (attack.south) --
  node[pos=0.35, right] {$a$}
  (aDrop) --
  (sum.east);

  \draw[signal]
  (attack.east) --
  (estimator.west);

  \coordinate (y_tilde_branch) at ($(cont.south)+(0.6,-\vGap)$);
  \coordinate (fb) at ($(estimator.south)+(0,-\estDrop)$);

  \fill (y_tilde_branch) circle (1.5pt);

  \draw[signal]
  (y_tilde_branch) |-
  (fb) --
  (estimator.south);

  \coordinate (zOut) at ($(estimator.east)+(1.1,0)$);
  \coordinate (tap)  at ($(estimator.east)+(0.15,0)$);

  \draw[signal]
  (estimator.east) --
  node[above] {$\hat z$}
  (zOut);

  \draw[signal]
  (tap) |-
  node[pos=0.8, above] {$\hat z$}
  ($(attack.north)+(0,\zHeight)$) -|
  (attack.north);

  \fill (tap) circle (1.5pt);

  \begin{pgfonlayer}{background}

    \coordinate (CS_BL)  at ($(cont.west)+(-0.30,\boxBottom)$);
    \coordinate (CS_TR)  at ($(plant.east)+(0.30,\boxTop)$);
    \coordinate (CS_Sep) at ($(CS_BL |- 0,\boxSep)$);

    \fill[gray200!40]
    (CS_BL) rectangle (CS_Sep -| CS_TR);

    \fill[gray200!80]
    (CS_Sep -| CS_BL) rectangle (CS_TR);

    \draw[control container]
    (CS_BL) rectangle (CS_TR);

    \draw[thick, draw=gray!80]
    (CS_Sep -| CS_BL) -- (CS_Sep -| CS_TR);

    \node[font=\normalsize]
    at ($(CS_BL)!0.5!(CS_Sep -| CS_TR)$)
    {Control System};

    \coordinate (Adv_BL)  at ($(attack.west)+(-0.30,\boxBottom)$);
    \coordinate (Adv_TR)  at ($(estimator.east)+(0.70,\boxTop)$);
    \coordinate (Adv_Sep) at ($(Adv_BL |- 0,\boxSep)$);

    \fill[my_rose_2!50, fill opacity=0.8]
    (Adv_BL) rectangle (Adv_Sep -| Adv_TR);

    \fill[my_rose_2!80, fill opacity=0.2]
    (Adv_Sep -| Adv_BL) rectangle (Adv_TR);

    \draw[adversary container]
    (Adv_BL) rectangle (Adv_TR);

    \draw[thick, draw=gray!80]
    (Adv_Sep -| Adv_BL) -- (Adv_Sep -| Adv_TR);

    \node[font=\normalsize]
    at ($(Adv_BL)!0.5!(Adv_Sep -| Adv_TR)$)
    {Adversary};

  \end{pgfonlayer}
\end{tikzpicture}
  }
  \caption{Problem setup. The block \(\Sigma_{\textrm{obs}}\) denotes the observer the adversary will design to achieve Objective 2.}
  \label{fig:problem_setup}
\end{figure}

In the following sections, we will present the main methodology to achieve Objectives \ref{obj:1}-\ref{obj:3}. The process of inducing local observability will be presented in Section \ref{sec:inducing_observability}, and the state estimation and stability analysis will be presented in Section \ref{sec:observer}.

\section{Inducing Local Observability}\label{sec:inducing_observability}
As explained in Section \ref{sec:problem_formulation_challenge}, the origin of systems with a quadratic output map is locally unobservable. To overcome this lack of local observability, the attack signal \(a\) will be designed using the knowledge of \(h(\cdot)\) and its gradient so that the linearization of the modified output map is nonzero at the origin. In the following analysis, the design procedure of the attack signal \(a\) is presented.

As discussed in Section~\ref{sec:problem_formulation}, the system in~\eqref{eq:sys_dynamics_clean} is unobservable because the linearization of its output map is zero at the origin. Although the analytical form of the output map's gradient is known, its true value cannot be calculated because it depends on the unavailable state \(z\). Therefore, the attacker must use an estimate, \(\hat{z}\), which will be provided by an estimator {designed} in Section \ref{sec:observer}. The attack signal \(a\) is designed by projecting the estimated gradient onto \(\mathbb{R}\) with the following projection vector \([\pi^\top, 0^\top]\), with $\pi \in \bbR^{n_p}$ such that
\begin{equation}
    \begin{aligned}
        a(\hat z) & := \begin{bmatrix} \pi^\top &0^\top\end{bmatrix}\nabla h(\hat z) = \bar H(\pi)\hat z,
    \end{aligned}
    \label{eq:attack_signal}
\end{equation}
where \(\bar H(\pi):=\begin{bmatrix}2\pi^\top Q_p & 0\end{bmatrix}\). The projection vector characerized by \(\pi\) is a design parameter that will be used to induce local observability.

The modified output defining \(\tilde y\) is therefore \(h(z) + \bar H(\pi) \hat z = h(z) + \bar H(\pi)(z+e)\), where \(e := \hat{z} - z\) is the state estimation error. The linearization of this new output map around the origin is now nonzero for a suitable choice of the projection vector \(\pi\).
Building on this, the linearization of the modified system in {\eqref{eq:system_dyn}} around the equilibrium, while treating the error \(e\) as input to the system, is characterized by the following respective system and output matrices:
\begin{align}
    \bar{F}(\pi) & := \frac{\partial \tilde f(z, a)}{\partial z}\bigg|_{z=0} =
    \underbrace{\begin{bmatrix}
                        A_p + 2 B_p D_c \pi^\top Q_p & B_p C_c \\
                        2 B_c \pi^\top Q_p           & A_c
                    \end{bmatrix}}_{= A + B \bar{H}(\pi)}, \label{eq:Fbar}                \\
    \bar{H}(\pi) & = \begin{bmatrix} 2\pi^\top Q_p & 0 \end{bmatrix}. \label{eq:Hbar}
\end{align}
The problem of inducing observability now reduces to designing a suitable {projection} vector $\pi$. The objective is therefore to establish a procedure for selecting the design vector $\pi \in \mathbb{R}^{n_p}$ to induce local observability.

This is achieved through a systematic approach: first, we derive algebraic conditions for the unobservable modes using the \gls{pbh} lemma, then prove that such modes cannot span the entire space $\bbR^{n_p}$, thereby ensuring the existence of a suitable {projection} vector. Next, we derive the unobservability conditions.

First, recall the \gls{pbh} test given in the following lemma.
\begin{lem}[\gls{pbh} test \cite{Hespanha2018}, Ch. 15]\label{lem:pbh}
    A linear time invariant pair $(F, H)$ is unobservable if and only if there exists a complex number $s \in \bbC$ and a non-zero vector ${w} \in \bbC^n$ such that
    \begin{align}
        H{w} = 0, \quad F{w} = s{w},
    \end{align}
    {which is equivalent to \(\textrm{rank}\left(\begin{bsmallmatrix}
            F - s I\\
            H
        \end{bsmallmatrix}\right) \neq n, \forall\,s \in \bbC.\)}
\end{lem}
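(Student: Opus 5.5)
The plan is to go through the Kalman observability matrix
\[
\mathcal{O} := \begin{bmatrix} H^\top & (HF)^\top & \cdots & (HF^{n-1})^\top \end{bmatrix}^\top .
\]
I take as the definition that $(F,H)$ is unobservable if and only if $\mathcal{O}$ has a nontrivial kernel, equivalently $\operatorname{rank}\mathcal{O}<n$. Since $F$ and $H$ are real, a nonzero complex kernel vector $v$ of $\mathcal{O}$ yields a nonzero real one, namely $\operatorname{Re} v$ or $\operatorname{Im} v$. So I will work over $\bbC^n$ throughout.

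\emph{Sufficiency.} Suppose $s\in\bbC$ and $w\neq 0$ satisfy $Hw=0$ and $Fw=sw$. Then $HF^k w = s^k Hw = 0$ for every $k\ge 0$. Hence $\mathcal{O}w=0$, $\mathcal{O}$ is rank deficient, and the pair is unobservable.

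\emph{Necessity.} Let $N:=\ker\mathcal{O}\subseteq\bbC^n$ and assume $N\neq\{0\}$. The key step is that $N$ is $F$-invariant. Take $v\in N$. Then $HF^k(Fv)=HF^{k+1}v$ vanishes for $k=0,\dots,n-2$ because $v\in N$. For $k=n-1$, the Cayley--Hamilton theorem writes $F^n$ as a linear combination of $I,F,\dots,F^{n-1}$, so $HF^n v=0$ as well. Thus $Fv\in N$. Because $N$ is a nonzero invariant subspace of a complex linear map, the restriction $F|_N$ has an eigenvector $w\in N$, $w\neq0$, with $Fw=sw$ for some $s\in\bbC$. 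Since $N\subseteq\ker H$ (the first block row of $\mathcal{O}$), we also have $Hw=0$.

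\emph{Rank formulation.} The conditions $Fw=sw$ and $Hw=0$ say exactly that $\begin{bsmallmatrix}F-sI\\ H\end{bsmallmatrix}w=0$ for some nonzero $w$. So unobservability is equivalent to $\operatorname{rank}\begin{bsmallmatrix}F-sI\\ H\end{bsmallmatrix}<n$ for \emph{some} $s\in\bbC$, necessarily an eigenvalue of $F$. Equivalently, observability holds iff the rank equals $n$ for all $s$. I read the quantifier in the statement in this sense.

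The main obstacle is the $F$-invariance of $\ker\mathcal{O}$, which relies on Cayley--Hamilton. This is also the only point where it matters that we work over $\bbC$, since that guarantees the existence of an eigenvector inside $N$.
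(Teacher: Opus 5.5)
Your proof is correct. The paper gives no proof of this lemma; it cites the textbook. There the hard direction (unobservable $\Rightarrow$ an eigenvector of $F$ in $\ker H$) is usually obtained in one of two ways. One is via the observability (Kalman) decomposition: change coordinates so the unobservable block of $F$ is decoupled from the output, then take an eigenvector of that block. The other is by duality from the controllability eigenvector test, which is proved the same way.

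Your argument isolates the ingredient that makes that decomposition possible, namely $F$-invariance of $\ker\mathcal{O}$ via Cayley--Hamilton. You then need only the fact that a nonzero invariant subspace of a complex matrix contains an eigenvector. It is shorter and self-contained and avoids building a change of basis. The decomposition route costs more setup, but it produces a normal form that is reused for the detectability version of the test and for minimal-realization results.

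You were also right to reinterpret the rank clause. Read literally, $\operatorname{rank}\begin{bsmallmatrix}F-sI\\ H\end{bsmallmatrix}\neq n$ for all $s\in\bbC$ can never hold: $F-sI$ is invertible for every $s\notin\sigma(F)$, so the stacked matrix has full column rank there. The correct equivalence is rank deficiency for some $s$, necessarily an eigenvalue of $F$; equivalently, the pair is observable if and only if the rank equals $n$ for every $s$. The paper only uses the eigenvector form of the test (in Corollary~\ref{cor:unobs_cond}), so the misquantified clause does not affect anything downstream.
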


\begin{cor}\label{cor:unobs_cond}
    The pair $(\bar{F}(\pi), \bar{H}(\pi))$, as given in \eqref{eq:Fbar} and \eqref{eq:Hbar}, is unobservable if and only if there exists a complex number $s \in \bbC$ and a non-zero vector ${w} = [{w}_p^\top, {w}_c^\top]^\top \in \bbC^{n_p+n_c}$ satisfying the following three conditions
    \begin{equation} \label{eq:pbh_combined}
        \begin{split}
            \pi^\top Q_p w_p & = 0,\\
            (sI - A_c) w_c &= 0,\\
            (sI - A_p) w_p &= B_p C_c w_c,
        \end{split}
    \end{equation}
    simultaneously.
\end{cor}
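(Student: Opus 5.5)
The plan is to apply Lemma~\ref{lem:pbh} directly to the pair $(\bar F(\pi),\bar H(\pi))$ from \eqref{eq:Fbar}--\eqref{eq:Hbar}, and then simplify the eigenvector condition using the output condition. By the lemma, the pair is unobservable if and only if there exist $s\in\bbC$ and $w\neq 0$ with $\bar H(\pi)w=0$ and $\bar F(\pi)w=sw$. Partition $w=[w_p^\top,w_c^\top]^\top$ conformally with $z=[x_p^\top,x_c^\top]^\top$ and translate each condition blockwise.

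\textbf{Output condition.} From \eqref{eq:Hbar} we get $\bar H(\pi)w = 2\pi^\top Q_p w_p$. Hence $\bar H(\pi)w=0$ holds if and only if $\pi^\top Q_p w_p=0$, which is the first line of \eqref{eq:pbh_combined}.

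\textbf{Eigenvector condition.} Write $\bar F(\pi)=A+B\bar H(\pi)$, as in \eqref{eq:Fbar}. Then
\[
\bar F(\pi)w = Aw + B\,\bar H(\pi)w .
\]
Whenever $\bar H(\pi)w=0$, the second term vanishes, so $\bar F(\pi)w=sw$ is equivalent to $Aw=sw$. This decoupling is the key step: the $\pi$-dependent terms $2B_pD_c\pi^\top Q_p w_p$ and $2B_c\pi^\top Q_p w_p$ in the two block rows are both multiples of the scalar $\pi^\top Q_p w_p$, which is zero.

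Using the block upper-triangular structure of $A$, the condition $Aw=sw$ splits into:
\begin{itemize}
\item the first block row, $A_pw_p+B_pC_cw_c=sw_p$, i.e.\ $(sI-A_p)w_p=B_pC_cw_c$;
\item the second block row, $A_cw_c=sw_c$, i.e.\ $(sI-A_c)w_c=0$.
\end{itemize}
These are the remaining two lines of \eqref{eq:pbh_combined}.

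\textbf{Both directions.} In the forward direction, the PBH vector $w$ satisfies $\bar H(\pi)w=0$, so the reduction above applies and yields all three conditions. In the converse direction, the first condition gives $\bar H(\pi)w=0$. Combined with the other two conditions, this gives $Aw=sw$, hence $\bar F(\pi)w=sw$, and Lemma~\ref{lem:pbh} then shows that the pair is unobservable. The vector $w$ is the same nonzero vector in both directions, so the nontriviality requirement carries over.

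There is no real obstacle here. The only point needing care is to use the output condition before expanding $\bar F(\pi)w$, so that the $\pi$-dependent entries of $\bar F(\pi)$ drop out cleanly.
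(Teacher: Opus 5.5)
Your proposal is correct and matches the paper's proof. Both apply the PBH lemma, use $\bar H(\pi)w=0 \iff \pi^\top Q_p w_p=0$ to drop the $\pi$-dependent terms in $\bar F(\pi)w = Aw + B\bar H(\pi)w$, and then read off the two block rows of $Aw=sw$. Your explicit check of the converse direction, which the paper leaves implicit, is a welcome addition.
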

\begin{proof}
    From Lemma~\ref{lem:pbh}, unobservability requires $\bar F(\pi){w}=s{w}$ and $\bar H(\pi){w}=0$. The latter gives $\pi^\top Q_p{w}_p=0$. Substituting into the block form of $\bar F(\pi){w}=s{w}$ cancels the terms depending on $w_p$, leaving $(sI-A_p){w}_p=B_p C_c{w}_c$ and $(sI-A_c){w}_c=0$, which are the conditions in \eqref{eq:pbh_combined}.
\end{proof}
By this corollary, we reduce the question of unobservability to a set of algebraic conditions that subsequently provide a systematic procedure for inducing observability.
Next, we will show that choosing a vector $\pi$ to induce observability will be done by avoiding a particular finite set of hyperplanes in its design.
Additionally, we will show that such a projection vector \(\pi\) always exists. The details are given in the next lemma. It shows that an adversary can systematically choose a specific vector $\pi$ for this purpose. But first we state the following assumption.
\begin{assumption}[System regularity]\label{as:system_spectra}
    (i) The spectra of the plant and controller matrices are disjoint, i.e., $\sigma(A_p) \cap \sigma(A_c) = \varnothing$, and (ii) the term $B_p C_c \neq 0$.
\end{assumption}
Assumption \ref{as:system_spectra}-(i) is non-restrictive, requiring only that the plant \eqref{eq:plant} and the output-feedback dynamic controller \eqref{eq:controller} do not share the same poles. Assumption \ref{as:system_spectra}-(ii) is a fundamental requirement for output-feedback controllers as it gives the controller actuation authority. Assumption \ref{as:system_spectra} is crucial for the lemma below as it ensures that \((\lambda_{c,j}I-A_p)\) is invertible for every controller eigenvalue \(\lambda_{c,j}\) with \(j \in \{1, \dots, n_c\}\), so that the corresponding vector \(w_{p,j}=(\lambda_{c,j}I-A_p)^{-1}B_pC_cw_{c,j}\)
is uniquely defined. The associated constraint
\(\pi^\top Q_pw_{p,j}=0\) then defines one forbidden subspace in
\(\mathbb R^{n_p}\), thus ensuring that the set $\mathcal{H}_{C}$ in \eqref{eq:hyperplane_controller} below has exactly $n_c$ elements.

\begin{lem}
    \label{lem:unobservability}
    Under Assumption \ref{as:system_spectra}, the system defined by \eqref{eq:system_dyn} and \eqref{eq:attack_signal} is locally observable at the equilibrium ${z}={0}$ if and only if there exists \(\pi \in \bbR^{n_p}\) such that \(\pi \notin \Pi_{\textrm{unobs}}\) with
    \begin{equation} \label{eq:pi_unobs}
        \Pi_{\mathrm{unobs}} := \cH_P \cup \cH_C ,
    \end{equation}
    where
    \begingroup
    \small
    \begin{align}
        \cH_P & = \bigcup_{i=\{1,\dots,n_p\}} \left\{ \pi \in \bbR^{n_p} \mid \pi^\top Q_p w_{p,i} = 0 \right\}, \label{eq:hyperplane_plant} \\
        \cH_C & = \bigcup_{j=\{1,\dots,n_c\}} \left\{ \begin{aligned}
                                                           & \pi \in \bbR^{n_p} \mid                                               \\
                                                           & \pi^\top \big(Q_p(\lambda_{c,j} I - A_p)^{-1} B_pC_c w_{c,j}\big) = 0
                                                      \end{aligned} \right\}, \label{eq:hyperplane_controller}
    \end{align}
    \endgroup
    where $(\lambda_{p,i}, {w}_{p,i})$ and $(\lambda_{c,j}, {w}_{c,j})$ are the eigenpairs of the plant and controller matrices $A_p$, and $A_c$, respectively. Moreover, there always exists a vector ${\pi^\star} \in \bbR^{n_p} \setminus \Pi_{\textrm{unobs}}$ that renders the system \eqref{eq:system_dyn}, \eqref{eq:attack_signal} locally observable at the origin for $\pi = \pi^\star$.
\end{lem}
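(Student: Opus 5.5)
The plan is to work directly from Corollary~\ref{cor:unobs_cond}. We characterize every triple $(s,w_p,w_c)$ that satisfies the last two equations of \eqref{eq:pbh_combined}, and then read off the forbidden set of $\pi$ from the first equation. The analysis splits into two cases, according to whether $w_c=0$ or $w_c\neq 0$.

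\emph{Case $w_c=0$.} Then $w_p\neq 0$ and $(sI-A_p)w_p=0$, so $(s,w_p)$ is an eigenpair $(\lambda_{p,i},w_{p,i})$ of $A_p$. Unobservability then reduces to $\pi^\top Q_p w_{p,i}=0$, which gives the set $\cH_P$.

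\emph{Case $w_c\neq 0$.} Then $(sI-A_c)w_c=0$ forces $s=\lambda_{c,j}$ and $w_c=w_{c,j}$. By Assumption~\ref{as:system_spectra}(i), $\lambda_{c,j}I-A_p$ is invertible, so the third equation determines $w_p=(\lambda_{c,j}I-A_p)^{-1}B_pC_cw_{c,j}$ uniquely. The first equation then becomes exactly the condition defining $\cH_C$.

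Conversely, for any $\pi\in\cH_P\cup\cH_C$ the corresponding $w$ built in either case is nonzero and satisfies \eqref{eq:pbh_combined}, so the pair is unobservable. Hence $(\bar F(\pi),\bar H(\pi))$ is observable if and only if $\pi\notin\Pi_{\mathrm{unobs}}$. Two details need care here:
\begin{itemize}
\item With repeated or defective eigenvalues, the index $i$ (resp.\ $j$) should range over all eigenvectors, i.e.\ over a basis of each eigenspace, since any eigenvector in the span must be excluded. Strictly, every vector of the eigenspace matters, so I would note this or assume simple eigenvalues as in the statement.
\item Eigenvectors may be complex while $\pi$ is real. A complex condition $\pi^\top v=0$ splits into $\pi^\top\mathrm{Re}\,v=0$ and $\pi^\top\mathrm{Im}\,v=0$, each a subspace of $\bbR^{n_p}$.
\end{itemize}

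\emph{Existence of $\pi^\star$ (the main obstacle).} Each set in the union must be a \emph{proper} subspace of $\bbR^{n_p}$; a finite union of proper subspaces is then a closed set with empty interior and cannot cover $\bbR^{n_p}$. Properness means that the defining vector $v$ (namely $Q_pw_{p,i}$, or $Q_p(\lambda_{c,j}I-A_p)^{-1}B_pC_cw_{c,j}$) is nonzero. This is the delicate point: it requires $Q_pw_{p,i}\neq0$, and that $(\lambda_{c,j}I-A_p)^{-1}B_pC_cw_{c,j}\notin\ker Q_p$.
\begin{itemize}
\item The factor $(\lambda_{c,j}I-A_p)^{-1}$ is invertible, so it preserves nonzero vectors.
\item Assumption~\ref{as:system_spectra}(ii) gives $B_pC_c\neq0$. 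I would try to argue, using $B_pC_c\neq0$, that $B_pC_cw_{c,j}\neq 0$, for instance under a genericity or observability property of the controller.
\item Nonvanishing after applying $Q_p$ needs some nondegeneracy of $Q_p$; I expect the authors to use $Q_p$ invertible, or implicitly $Q_pw\neq0$.
\end{itemize}
If a defining vector vanished, the corresponding mode would be unobservable for every $\pi$, so some such condition is genuinely necessary. I would state it explicitly as the hypothesis needed.

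Granting nonzero defining vectors, the existence argument concludes with a measure-zero (or Baire) argument. A concrete construction: choose $\pi$ not orthogonal to finitely many nonzero vectors $v_k$, e.g.\ $\pi=\sum_k t^k\mathrm{Re}/\mathrm{Im}\,v_k$ for a generic $t$. Alternatively, a random $\pi$ works almost surely, since $\pi^\top v_k$ is a nonzero polynomial in $\pi$ for each $k$.
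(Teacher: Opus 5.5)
Your proposal follows the paper's proof step for step: the same split of the conditions of Corollary~\ref{cor:unobs_cond} into $w_c=0$ (yielding $\cH_P$) and $w_c\neq0$ (yielding $\cH_C$ via invertibility of $\lambda_{c,j}I-A_p$), closed by the fact that a finite union of proper subspaces cannot cover $\bbR^{n_p}$. Your nondegeneracy caveat is well founded and marks a gap in the paper, which simply asserts $Q_pw_{p,i}\neq0$ and infers $w_p\neq0$ from $B_pC_c\neq0$, although neither follows from Assumption~\ref{as:system_spectra} (an eigenvector of $A_p$ in $\ker Q_p$, or a controller eigenvector with $C_cw_{c,j}=0$, is unobservable for every $\pi$); likewise, ranging over a basis of a multi-dimensional eigenspace is not enough, since one linear condition on an eigenspace of dimension at least two always has a nonzero solution and then no $\pi$ works, so your extra hypothesis together with one-dimensional eigenspaces is exactly what the existence claim requires.
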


\begin{proof}
    See Appendix \ref{app:unobservability}.
\end{proof}

Hence, a suitable projection vector $\pi^\star$ always exists to meet Objective \ref{obj:1} for control system \eqref{eq:plant}, \eqref{eq:controller}, which satisfies Assumption \ref{as:system_spectra}. This, in turn, provides a foundation for the design of an observer that exploits this induced observability, which is developed in the next section.

\section{State Observer Design} \label{sec:observer}

Having established in Lemma~\ref{lem:unobservability} that the system {\eqref{eq:system_dyn} with \eqref{eq:attack_signal}} becomes locally observable for a proper choice of vector $\pi$, we now address the design of an observer that reconstructs the state $z$ from the specially designed (perturbed) output $\tilde{y}$. This is the step toward achieving the Objective~\ref{obj:2}, ensuring that the estimation error converges with a desired exponential rate.

\subsection{Observer structure}

We will synthesize a Luenberger-type observer. Let $\hat{z} := [\hat{x}_p^\top, \hat{x}_c^\top]^\top$ be the estimate of the state $z$. The observer is designed as follows:
\begin{equation} \label{eq:observer}
    \begin{split}
        \dot{\hat{z}}  = A \hat{z} & + B \big( \hat{z}^{\top} Q \hat{z} + 2\bar{H}(\pi)  \hat{z} \big) \\
        & + L \big( \hat{z}^{\top} Q \hat{z} + 2\bar{H}(\pi) \hat{z} - {\tilde y} \big)
    \end{split}
\end{equation}
where $L \in \bbR^{n \times 1}$ is the observer gain vector to be designed and recall that \(\tilde y\) is defined in \eqref{eqn:corrupted_signal}. The observer is designed to leverage the induced observability of the modified pair $(\bar{F}(\pi), \bar{H}(\pi))$. Thus, the additional factor $2$ multiplying $\bar{H}(\pi)\hat{z}$ in the nonlinear output model  $\hat{z}^{\top}Q\hat{z} + 2\bar{H}(\pi)\hat{z}$ in \eqref{eq:observer} ensures that the observability of $(\bar{F}(\pi), \bar{H}(\pi))$ {plays a role in the convergence of the estimation error system \(e\)}. More details are given in the following subsections.

Having designed the observer dynamics in~\eqref{eq:observer}, we next analyze the behavior of the estimation error. This analysis provides the foundation for establishing the convergence properties of the observer.

\subsection{Error dynamics}
Recall the definition of the estimation error $e = \hat z - z$. Subtracting the observer dynamics \eqref{eq:observer} from the system dynamics \eqref{eq:system_dyn} yields
\begin{equation}
    \begin{split} \label{eq:error_dynamics}
        \dot e = Ae &+ (B+L)(\hat{z}^\top Q \hat{z} - z^\top Q z) \\ &
        + (B+L)\bar{H}(\pi)\hat{z}.
    \end{split}
\end{equation}
Using the definition for \(\bar F(\pi)\) in \eqref{eq:Fbar} and by expanding the quadratic terms as
$\hat{z}^\top Q \hat{z} - z^\top Q z = (z + e)^\top Q (z + e) - z^\top Q z = 2z^\top Q e + e^\top Q e$, we arrive at the following compact form of the error dynamics:
\begin{align} \label{eq:error_dyn_final}
    \dot{e} = & \left(\bar{F}(\pi) + L\bar{H}(\pi)\right)e + (B+L)\bar{H}(\pi)z \nonumber \\
              & + (B+L)(2z^\top Q e + e^\top Q e) \,=:\, f_e(e, z).
\end{align}
The error dynamics in~\eqref{eq:error_dyn_final} describe the evolution of the estimation error $e$ which is coupled with the state \(z\). To analyze the local stability of the plant state and the observer convergence,
we now consider the combined dynamics of $z$ and $e$ as a single augmented system.
In this way, we can investigate the closed-loop stability and local observability properties under the designed attack signal.

\subsection{Achieving both local stability and local observability}
Consider the augmented system with state $\varphi = [z^\top, e^\top]^\top$ that has the dynamics,
\begin{equation}\label{eq:phi_full}
    \dot \varphi = \begin{pmatrix}
        Az + B\left(z^\top Q z + \bar H(\pi)(z+e)\right) \\
        f_e(e, z)
    \end{pmatrix} =: f_\varphi (\varphi),
\end{equation}
where recall that \(\bar H(\pi)\) is given in \eqref{eq:Hbar} and \(f_e(e, z)\) is given in \eqref{eq:error_dyn_final}.
The Jacobian of $f_\varphi(\varphi)$ at the origin $\varphi=0$ is
\begin{align} \label{eqn:aug_linearized_dyn}
    J_{\varphi} =
    \left[
        \begin{array}{c;{2pt/2pt}c}
            \bar{F}(\pi)      & B\bar{H}(\pi)                \\ \hdashline[2pt/2pt] \noalign{\vskip 2pt}
            (B+L)\bar{H}(\pi) & \bar{F}(\pi) + L\bar{H}(\pi)
        \end{array}
        \right].
\end{align}
To analyze the stability of the augmented system, we perform a similarity transformation on the Jacobian matrix $J_{\varphi}$ using the transformation matrix $T = \begin{bsmallmatrix} I & 0 \\ I & I \end{bsmallmatrix}$, which changes {its} coordinates from $[z^\top, e^\top]^\top$ to $[z^\top, \hat{z}^\top]^\top$. The resulting transformed matrix, $\tilde J_{\varphi} = T J_{\varphi} T^{-1}$, is block upper-triangular and is given below
\begin{equation} \label{eq:phi_Jacobian}
    \tilde{J}_{\varphi} =
    \left[
        \begin{array}{c;{2pt/2pt}c}
            A & B\bar{H}(\pi)                    \\ \hdashline[2pt/2pt] \noalign{\vskip 2pt}
            0 & \bar{F}(\pi) + (B+L)\bar{H}(\pi)
        \end{array}
        \right].
\end{equation}

Thus, the spectrum of the full augmented system's Jacobian, \(\sigma(J_{\varphi})\), is equivalent to the spectrum of its similarity transform, \(\tilde J_{\varphi}\), which is block-triangular. The eigenvalues of the Jacobian in \eqref{eq:phi_Jacobian} are the union of the eigenvalues of the diagonal blocks. The upper block is the matrix $A$, which is {Hurwitz by Assumption \ref{as:system_stability} (see Remark \ref{rem:system_stability})}. For the bottom-right block, since the pair $(\bar{F}(\pi), \bar{H}(\pi))$ has been made observable, a gain $\tilde{L} = (B+L)$ can be chosen (by proper design of $L$) to assign any desired stable spectrum for \(\bar F(\pi) + (B+L) \bar H(\pi)\).

Accordingly, both Objectives \ref{obj:2} and \ref{obj:3} are achieved. The obtained result guarantees maintaining the local exponential stability of the system \eqref{eq:system_dyn}-\eqref{eq:attack_signal} and local exponential convergence of the estimation error \eqref{eq:error_dyn_final}. However, in practical applications, local stability alone is insufficient and an estimate of the \gls{roa} is required \citep[pp. 312-314]{khalil_nonlinear_2002}. Consequently, we will next augment this result with an estimate of the \gls{roa}.

But first, we will present an intermediate analysis to show a condition under which \(\bar F(\pi)\) is Hurwitz, which is fundamental in finding an estimate of the \gls{roa}. Since \(\bar F(\pi) = A + B \bar H(\pi)\), it might seem that if the pair \((A, B)\) is controllable, then the matrix \(\bar F(\pi)\) can be made Hurwitz by the choice of \(\bar H(\pi)\). Recall from \eqref{eq:Hbar} that $\bar{H}(\pi) = [2\pi^\top Q_p\; 0]$, which has a structure that limits this proposition. We propose an alternative by scaling the choice of the projection vector $\pi$ by a scalar $\gamma$ such that the matrix $\bar{F}(\pi)$ remains Hurwitz. We show this in detail in the following lemma.

\begin{lem}\label{lem:bound}
    Consider the dynamics $\dot z = \bar F(\pi)z$ where we recall from \eqref{eq:Fbar} that  $\bar F(\pi) = A + B\bar H(\pi)$ with $\bar H(\pi)$ as defined in \eqref{eq:Hbar}. Let $\pi^\star \in \bbR^{n_p} \setminus \Pi_{\textrm{unobs}}$ be such that the pair $(\bar F (\pi^\star), \bar H (\pi^\star))$ is observable and define
    \begin{gather}
        \pi = \gamma \pi^\star \label{eq:pi_scaling}
    \end{gather}
    with the factor $\gamma$ satisfying
    \begin{gather}
        0 \;<\; \gamma \;<\; \frac{\lambda_{\min}(Y)}{4\,\|S B\|\,\|Q_p\pi^\star\|}, \label{eq:gamma_bound}
    \end{gather}
    where \(S\) solves \(A^\top S + S A \;=\; -Y\), with \(Y\succ0\). Then, \(\bar F(\pi)\) is Hurwitz. Moreover, the pair \(\bigl(\bar F(\pi),\bar H(\pi)\bigr) \) remains observable.
\end{lem}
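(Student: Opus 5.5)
The plan is a quadratic Lyapunov argument for the Hurwitz property, followed by a homogeneity argument for observability.

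\textbf{Hurwitz property.} Since $A$ is Hurwitz by Assumption~\ref{as:system_stability} (Remark~\ref{rem:system_stability}), for any $Y\succ0$ the Lyapunov equation $A^\top S+SA=-Y$ has a unique solution $S=S^\top\succ0$. Take $V(z)=z^\top S z$ along $\dot z=\bar F(\pi)z$ with $\bar F(\pi)=A+B\bar H(\pi)$. Then
\begin{equation*}
\dot V = z^\top(A^\top S+SA)z + 2z^\top SB\,\bar H(\pi)z = -z^\top Y z + 2z^\top SB\,\bar H(\pi) z .
\end{equation*}
Substituting $\pi=\gamma\pi^\star$ gives $\bar H(\pi)=\gamma\bar H(\pi^\star)$, and $\|\bar H(\pi^\star)\|=\|[2\pi^{\star\top}Q_p\;\;0]\|=2\|Q_p\pi^\star\|$. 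By Cauchy--Schwarz,
\begin{equation*}
|2z^\top SB\bar H(\pi)z|\le 4\gamma\|SB\|\,\|Q_p\pi^\star\|\,\|z\|^2 .
\end{equation*}
Hence $\dot V\le -\bigl(\lambda_{\min}(Y)-4\gamma\|SB\|\|Q_p\pi^\star\|\bigr)\|z\|^2$, and this is negative definite exactly when $\gamma$ satisfies \eqref{eq:gamma_bound}. Therefore $\bar F(\pi)$ is Hurwitz.

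\textbf{Well-posedness of the bound.} I need $Q_p\pi^\star\neq0$. If $Q_p\pi^\star=0$, then $\bar H(\pi^\star)=0$, which contradicts observability of $(\bar F(\pi^\star),\bar H(\pi^\star))$. If $SB=0$, the perturbation term vanishes and any $\gamma>0$ works, so the bound is read as $+\infty$.

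\textbf{Observability is preserved.} Here the difficulty is that $\bar F(\gamma\pi^\star)\neq\bar F(\pi^\star)$, so observability does not follow from scaling alone. I would use Corollary~\ref{cor:unobs_cond} instead. Its three conditions \eqref{eq:pbh_combined} involve $\pi$ only through the linear homogeneous equation $\pi^\top Q_p w_p=0$; the other two conditions depend only on $A_p,A_c,B_p,C_c$. Consequently, if $(s,w)$ witnessed unobservability for $\gamma\pi^\star$ with $\gamma\neq0$, then $\gamma\pi^{\star\top}Q_pw_p=0$ gives $\pi^{\star\top}Q_pw_p=0$. The same $(s,w)$ would then witness unobservability for $\pi^\star$, a contradiction.

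Equivalently, each set in $\Pi_{\mathrm{unobs}}$ in \eqref{eq:pi_unobs} is a linear subspace whose defining vectors do not depend on $\pi$, so it is invariant under nonzero scaling. Thus $\gamma\pi^\star\notin\Pi_{\mathrm{unobs}}$, and Lemma~\ref{lem:unobservability} gives observability of $(\bar F(\pi),\bar H(\pi))$.

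The main point of care is this last step: one must check that the PBH witnesses are genuinely $\pi$-independent. This is immediate from Corollary~\ref{cor:unobs_cond}, because the $\pi$-dependent block of $\bar F(\pi)$ acts on $w$ only through $\bar H(\pi)w=0$.
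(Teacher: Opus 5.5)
Your proposal is correct and follows the paper's route. It uses the same Lyapunov function $z^\top S z$ and the same bound $2\|SB\|\,\|\bar H(\pi)\| = 4\gamma\|SB\|\,\|Q_p\pi^\star\| < \lambda_{\min}(Y)$, and then observes that positive scaling of $\pi^\star$ cannot create an unobservability witness. Your justification of that last step via Corollary~\ref{cor:unobs_cond}, where $\pi$ enters only through the homogeneous equation $\pi^\top Q_p w_p = 0$, is slightly more self-contained than the paper's appeal to $\gamma\pi^\star \notin \Pi_{\mathrm{unobs}}$, because it does not depend on the hyperplane characterization in Lemma~\ref{lem:unobservability}.
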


\begin{proof}
    See Appendix \ref{app:bound}.
\end{proof}

Lemma \ref{lem:bound} allows for a scaling of the projection vector $\pi$ with $\gamma$ as in \eqref{eq:pi_scaling} while preserving the stability of the matrix $\bar{F}(\pi)$. This facilitates the estimation of the ROA of the augmented system \eqref{eq:phi_full} which is provided in the following lemma.

\begin{lem}[\gls{roa} estimate] \label{lem:roa}
    Consider the augmented system \eqref{eq:phi_full}. Let a projection vector $\pi = \gamma \pi^\star$ where $\pi^\star \in \bbR^{n_p} \setminus \Pi_{\textrm{unobs}}$, with $\Pi_{\textrm{unobs}}$ defined in \eqref{eq:pi_unobs}, and $\gamma$ chosen according to Lemma \ref{lem:bound}. Let an observer gain $L$ be chosen such that $\bar F(\pi)+L\bar H(\pi)$ is Hurwitz. Consider matrices $W_1, W_2 \in \mathbb{R}^{n \times n}$, and let $P_1, P_2 \succ 0$ satisfy the Lyapunov equations
    \begingroup
    \begin{equation}\label{eq:roa_lyap}
        \hspace{-0.5em}
        \begin{split}
            &\bar F(\pi)^\top P_1 + P_1 \bar F(\pi) = -W_1,  \\
            &\big(\bar F(\pi) + L\bar H(\pi)\big)^\top P_2 + P_2 \big(\bar F(\pi) + L\bar H(\pi)\big) = -W_2. \hspace{-1em}
        \end{split}
    \end{equation}
    \endgroup
    Define the constants $c_1, c_3(\pi), c_4 > 0$ as
    \begingroup
    \allowdisplaybreaks
    \begin{equation} \label{eq:roa_constants}
        \begin{split}
            c_1 &:= \frac{\min\{\sigma_{\min}(W_1), \sigma_{\min}(W_2)\}}{\max\{\sigma_{\max}(P_1), \sigma_{\max}(P_2)\}}, \\[\jot]
            c_3(\pi) &:= \frac{
                2\|P_1 B\bar H(\pi)\|
                + 2\|\bar H(\pi)^\top (B+L)^\top P_2\|
            }{
                \sqrt{\lambda_{\min}(P_1)\lambda_{\min}(P_2)}
            }, \\[\jot]
            c_4 &:= \frac{2\|P_1 B\|\|Q\|}{\lambda_{\min}(P_1)^{3/2}} + \frac{4\|P_2(B+L)\|\|Q\|}{\sqrt{\lambda_{\min}(P_1)}\lambda_{\min}(P_2)}  \\
            &\quad + \frac{2\|P_2(B+L)\|\|Q\|}{\lambda_{\min}(P_2)^{3/2}}.
        \end{split}
    \end{equation}
    \endgroup
    Suppose $c_2(\pi) := c_1 - c_3(\pi) > 0$, then for any $\delta \in (0, c_2(\pi))$, an estimate of the \gls{roa} is given by the sublevel set $\Omega_c$:
    \begin{equation} \label{eq:roa_set}
        \Omega_c := \left\{ \varphi \in \mathbb{R}^{2n} \,\middle|\,
        V(\varphi) \le
        \left(\frac{c_2(\pi) - \delta}{c_4}\right)^2
        \right\},
    \end{equation}
    of the Lyapunov function

    \begin{equation} \label{eq:roa_lap_fun}
        V(\varphi) = z^\top P_1 z + e^\top P_2 e.
    \end{equation}
    In the interior of $\Omega_c$, the trajectories of $\varphi$ satisfy $\dot{V}(\varphi) \le -\delta V(\varphi)$.
\end{lem}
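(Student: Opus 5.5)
The plan is a direct Lyapunov computation with $V(\varphi)=z^\top P_1 z+e^\top P_2 e$. We write the augmented dynamics \eqref{eq:phi_full} as a linear part plus cross and quadratic remainders. Using $\bar F(\pi)=A+B\bar H(\pi)$, the $z$-equation becomes
\[
\dot z=\bar F(\pi)z+B\bar H(\pi)e+B\,z^\top Qz .
\]
The $e$-equation \eqref{eq:error_dyn_final} becomes
\[
\dot e=(\bar F(\pi)+L\bar H(\pi))e+(B+L)\bar H(\pi)z+(B+L)(2z^\top Qe+e^\top Qe).
\]
Differentiating $V$ along trajectories and using the Lyapunov equations \eqref{eq:roa_lyap}, the diagonal linear terms give $-z^\top W_1 z-e^\top W_2 e$. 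This is bounded by $-\min\{\sigma_{\min}(W_1),\sigma_{\min}(W_2)\}(\|z\|^2+\|e\|^2)$. Then $\|z\|^2+\|e\|^2\ge V/\max\{\sigma_{\max}(P_1),\sigma_{\max}(P_2)\}$ gives the term $-c_1V$.

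Next we bound the linear cross terms $2z^\top P_1B\bar H(\pi)e+2e^\top P_2(B+L)\bar H(\pi)z$. Their magnitude is at most $\bigl(2\|P_1B\bar H(\pi)\|+2\|\bar H(\pi)^\top(B+L)^\top P_2\|\bigr)\|z\|\|e\|$. We use $\|z\|\le\sqrt{z^\top P_1z/\lambda_{\min}(P_1)}$, the analogous bound for $e$, and $\sqrt{ab}\le (a+b)/2\le a+b$ with $a=z^\top P_1z$, $b=e^\top P_2e$. This yields a bound of $c_3(\pi)V$; the definition of $c_3$ has no factor $1/2$, so the cruder estimate suffices.

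The cubic terms are $2z^\top P_1B\,z^\top Qz$, $4e^\top P_2(B+L)z^\top Qe$ and $2e^\top P_2(B+L)e^\top Qe$. They are bounded respectively by $2\|P_1B\|\|Q\|\|z\|^3$, $4\|P_2(B+L)\|\|Q\|\|z\|\|e\|^2$ and $2\|P_2(B+L)\|\|Q\|\|e\|^3$. Substituting $\|z\|\le (V/\lambda_{\min}(P_1))^{1/2}$ and $\|e\|\le (V/\lambda_{\min}(P_2))^{1/2}$, each term is bounded by its coefficient in $c_4$ times $V^{3/2}$. The powers of $\lambda_{\min}$ match the given $c_4$ exactly, so the middle term uses $\lambda_{\min}(P_1)^{-1/2}\lambda_{\min}(P_2)^{-1}$. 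Collecting all the bounds,
\[
\dot V\le -(c_1-c_3(\pi))V+c_4V^{3/2}=-\bigl(c_2(\pi)-c_4\sqrt V\bigr)V .
\]

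On $\Omega_c$ we have $\sqrt V\le (c_2(\pi)-\delta)/c_4$, so $c_2(\pi)-c_4\sqrt V\ge\delta$ and hence $\dot V\le-\delta V$. Since $\dot V\le 0$ on $\Omega_c$, the sublevel set is forward invariant. Trajectories starting in it therefore stay in it and decay exponentially, which shows that $\Omega_c$ is an estimate of the ROA.

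The only delicate point is the bookkeeping of the cross and cubic terms, i.e. converting norms to $V$ with the correct $\lambda_{\min}$ powers so that the stated constants are reproduced. The assumption $c_2(\pi)>0$ is taken as given. Lemma~\ref{lem:bound} is used only to guarantee that $\bar F(\pi)$ is Hurwitz, so that $P_1\succ0$ exists.
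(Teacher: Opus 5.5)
Your proposal is correct and follows the paper's proof essentially step by step. It uses the same splitting of $\dot V$ into $-z^\top W_1 z - e^\top W_2 e$, linear cross terms bounded by $c_3(\pi)V$ and cubic terms bounded by $c_4 V^{3/2}$, then the sublevel-set bound $c_4\sqrt{V}\le c_2(\pi)-\delta$. Two small points: the step $-z^\top W_1 z - e^\top W_2 e \le -\min\{\sigma_{\min}(W_1),\sigma_{\min}(W_2)\}(\|z\|^2+\|e\|^2)$ silently requires $W_1, W_2 \succ 0$ (the paper's proof assumes this too, though the lemma statement omits it), and your explicit forward-invariance argument fills in a step the paper leaves implicit.
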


\begin{proof}
    See Appendix \ref{app:roa}
\end{proof}
The derived sublevel set $\Omega_c$ is a conservative estimate of the true \gls{roa}. This conservatism arises from two primary sources: (i) the geometric limitation of approximating the true ROA with a quadratic Lyapunov function and (ii) the worst-case scenario when bounding the norms.

Despite this conservatism, the derived \gls{roa} can be tuned. The factors that serve as tuning knobs include the choice of the observer gain $L$,  and the choices of matrices \(W_1\) and \(W_2\).
\begin{remark}
    Lemma \ref{lem:roa} offers an initial estimate of the \gls{roa}. If the operational range, i.e., the set of expected initial conditions bounded by a polytope, of the system at hand is known, the approach in \cite{amato2007region} {can be adopted to obtain a less conservative estimate of the \gls{roa}}. This method addresses whether a predefined polytope \(\mathcal{P}\) of initial conditions containing the equilibrium point lies within the true \gls{roa} for quadratic systems of the form \eqref{eq:system_dyn}. By verifying the inclusion of $\mathcal{P}$ in the \gls{roa} we can construct a less conservative estimate of the \gls{roa}. This method applies directly to our system \eqref{eq:system_dyn}.
\end{remark}
Having established the individual components of each objective, we are now in a position to state the main result of this work. The preceding analysis has shown that the challenges of inducing observability and preserving stability can be systematically overcome. The following theorem unifies the preceding analysis.

\begin{thm} \label{thm:main}
    Consider the closed-loop system \eqref{eq:system_dyn}, with attack signal \eqref{eq:attack_signal} under Assumptions \ref{as:system_stability}, \ref{as:adversary_model}, and \ref{as:system_spectra}. Suppose \(\pi^\star \in \bbR^{n_p} \setminus \Pi_{\mathrm{unobs}}\), where \(\Pi_{\mathrm{unobs}}\) is defined in \eqref{eq:pi_unobs}. Let \(\pi = \gamma \pi^\star\) where \(\gamma > 0 \) satisfies \eqref{eq:gamma_bound}. Then for any \(\alpha>0\), there exists an observer gain \(L \in \bbR^{n\times 1}\) such that, for every \((z(0),\hat z(0))\in \Omega_c^z\),
    \[
        \|z(t)-\hat z(t)\|
        \le
        \kappa e^{-\alpha t}\|z(0)-\hat z(0)\|,
        \qquad t\ge 0,
    \]
    for some \(\kappa>0\), where
    \[
        \Omega_c^z :=
        \left\{
        (z,\hat z)\in\bbR^{2n}
        \ \middle|\
        V(z,\hat z-z)
        \le
        \left(\frac{c_2(\pi)-\delta}{c_4}\right)^2
        \right\},
    \]
    with \(\delta \in (0, c_2(\pi))\), \(c_2(\pi)\) and \(c_4\) as defined in \eqref{eq:roa_constants}, and $V(z,\hat{z}-z)= z^\top P_1z+(\hat{z}-z)^\top P_2(\hat{z}-z)$ defined in \eqref{eq:roa_lap_fun} with $P_1, P_2$ satisfying \eqref{eq:roa_lyap} for positive definite $W_1$ and $W_2$.
    Moreover, the closed-loop system \eqref{eq:system_dyn}, \eqref{eq:attack_signal} is LES.
\end{thm}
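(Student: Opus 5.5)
The proof assembles Lemmas \ref{lem:unobservability}, \ref{lem:bound} and \ref{lem:roa}. It proceeds in three steps: choose $L$, extract the convergence rate from the Lyapunov decay, and derive local exponential stability of the closed loop from the Jacobian.

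\emph{Choice of $L$.} By Lemma~\ref{lem:unobservability}, $\pi^\star\notin\Pi_{\mathrm{unobs}}$ makes $(\bar F(\pi^\star),\bar H(\pi^\star))$ observable. By Lemma~\ref{lem:bound}, with $\pi=\gamma\pi^\star$ and $\gamma$ satisfying \eqref{eq:gamma_bound}, $\bar F(\pi)$ is Hurwitz and $(\bar F(\pi),\bar H(\pi))$ remains observable. Pole placement for an observable single-output pair therefore gives, for any $\alpha>0$, a gain $L$ such that every eigenvalue of $\bar F(\pi)+L\bar H(\pi)$ has real part less than $-\alpha'$, for some chosen $\alpha'$ larger than $\alpha$. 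The same argument applied to $\tilde L=B+L$ also places the spectrum of $\bar F(\pi)+(B+L)\bar H(\pi)$, the lower block of \eqref{eq:phi_Jacobian}. With $W_1,W_2\succ0$, the Lyapunov equations \eqref{eq:roa_lyap} have unique solutions $P_1,P_2\succ0$.

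\emph{Rate estimate.} Assuming $c_2(\pi)>0$, Lemma~\ref{lem:roa} gives $\dot V\le-\delta V$ on $\Omega_c$. The set $\Omega_c^z$ is exactly the image of $\Omega_c$ under $(z,e)\mapsto(z,z+e)$, so $\Omega_c$ is forward invariant. The comparison lemma then yields $V(\varphi(t))\le e^{-\delta t}V(\varphi(0))$. Since $\|e\|^2\le V/\lambda_{\min}(P_2)$, this bounds $\|e(t)\|$ by $e^{-\delta t/2}$ times $\|\varphi(0)\|$, not times $\|e(0)\|$.

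\emph{Main obstacle: tying the bound to $\|e(0)\|$ at rate $\alpha$.} Two issues remain: the bound above involves $z(0)$, and its rate is $\delta/2$, not an arbitrary $\alpha$. For the first issue, note that $e=0$ is not invariant, since $f_e(0,z)=(B+L)\bar H(\pi)z\neq0$. This reflects the fact that the attack, which uses the estimate $\hat z$, feeds back into the plant. The route I would try first is the one the paper's error-coordinate formulation suggests. Treat the error system \eqref{eq:error_dyn_final} as exponentially stable in $e$ with the $z$-dependent terms as a perturbation that decays exponentially. Combine the decay of $z$ from the first Lyapunov block with $\dot V_2\le -\lambda_{\min}(W_2)/\lambda_{\max}(P_2)\,V_2$ plus cross terms, and use a variation-of-constants bound. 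The fallback is to absorb $\|z(0)\|$ into $\kappa$ on the compact set $\Omega_c^z$. For the second issue, the local rate of $e$ near the origin is governed by the spectrum of $\bar F(\pi)+L\bar H(\pi)$, which has been placed to the left of $-\alpha'$. To make the Lyapunov-based decay rate itself match a prescribed $\alpha$, I would choose $W_2$ so that $\lambda_{\min}(W_2)/\lambda_{\max}(P_2)\ge2\alpha$. For example, take $W_2=-(M^\top P_2+P_2M)$ with $M:=\bar F(\pi)+L\bar H(\pi)+\alpha I$ Hurwitz. I also expect keeping $c_2(\pi)>0$ to require $\gamma$ small, because $c_3(\pi)$ scales linearly in $\gamma$.

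\emph{Closed-loop LES.} By \eqref{eq:phi_Jacobian}, $\sigma(J_\varphi)=\sigma(A)\cup\sigma(\bar F(\pi)+(B+L)\bar H(\pi))$. Here $A$ is Hurwitz by Remark~\ref{rem:system_stability}, and the second block is Hurwitz by the choice of $L$. So $J_\varphi$ is Hurwitz and the origin of \eqref{eq:phi_full} is LES by Lyapunov's indirect method. Since $\|z\|\le\|\varphi\|$, the closed loop \eqref{eq:system_dyn}, \eqref{eq:attack_signal} is LES as well. Equivalently, $\|z(t)\|^2\le V(\varphi(t))/\lambda_{\min}(P_1)$ together with the decay of $V$ on $\Omega_c$ gives an explicit exponential bound.
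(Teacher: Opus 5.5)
The skeleton of your argument (Lemmas \ref{lem:unobservability}, \ref{lem:bound}, \ref{lem:roa} plus the block-triangular form \eqref{eq:phi_Jacobian}) is the paper's. However, your ``main obstacle'' paragraph cannot be completed, and your own observation $f_e(0,z)=(B+L)\bar H(\pi)z\neq0$ shows why. Observability forces $\bar H(\pi)\neq0$, so take $z(0)$ arbitrarily small with $\bar H(\pi)z(0)\neq0$, and set $\hat z(0)=z(0)$. This point lies in $\Omega_c^z$ and makes the right-hand side of the claimed inequality $0$. Yet $\dot e(0)=(B+L)\bar H(\pi)z(0)\neq0$ whenever $L\neq-B$, so $e(t)\neq0$ for small $t>0$. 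Hence no $\kappa$ works, even one depending on the initial condition. Absorbing $\|z(0)\|$ into $\kappa$ fails because $\|z(0)\|/\|e(0)\|$ is unbounded on $\Omega_c^z$. Variation of constants leaves the forcing term $\int_0^t e^{(\bar F(\pi)+L\bar H(\pi))(t-s)}(B+L)\bar H(\pi)z(s)\,ds$, which does not scale with $e(0)$. The only gain that escapes is $L=-B$, for which \eqref{eq:error_dyn_final} collapses to $\dot e=Ae$; then the rate is dictated by $\sigma(A)$, and ``any $\alpha>0$'' fails. Tuning $W_2$ does not help either. Since $c_1$ takes the minimum over both blocks, $\delta<c_2(\pi)\le c_1\le\sigma_{\min}(W_1)/\sigma_{\max}(P_1)\le 2\min_{\lambda\in\sigma(\bar F(\pi))}(-\mathrm{Re}\,\lambda)$, independently of $L$ and $W_2$. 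Structurally, $A$ is a diagonal block of $\tilde J_\varphi$, and for $\hat z(0)=0$ the linearized error is exactly $-e^{At}z(0)$. So the statement's inequality, with arbitrary $\alpha$ and $\|z(0)-\hat z(0)\|$ on the right, is false as written. The paper's proof does not address this: it only cites Lemma \ref{lem:roa} for ``LES of the error with ROA estimate $\Omega_c^z$''. What your rate-estimate step does establish is $\|z(t)-\hat z(t)\|\le\kappa e^{-\delta t/2}\|\varphi(0)\|$ on $\Omega_c^z$, with $\delta$ capped as above. That is the correct conclusion, and you should stop there.

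Separately, your choice of $L$ double-counts. For a single-output observable pair, the gain realizing a prescribed characteristic polynomial is unique. So once $\sigma(\bar F(\pi)+L\bar H(\pi))$ is placed, $\sigma(\bar F(\pi)+(B+L)\bar H(\pi))$ is determined and cannot be placed again. The paper's proof has the mirror-image mismatch: it places $\bar F(\pi)+(B+L)\bar H(\pi)$, whereas Lemma \ref{lem:roa} needs $\bar F(\pi)+L\bar H(\pi)$ Hurwitz. The fix has three parts:
\begin{itemize}
\item Place only $\bar F(\pi)+L\bar H(\pi)$, which is what $P_2$ requires.
\item Keep $c_2(\pi)>0$ as a standing hypothesis.
\item Obtain closed-loop LES from $\dot V\le-\delta V$ on $\Omega_c$, as in your last sentence.
\end{itemize}
The quadratic part of $\dot V$ bounded in the proof of Lemma \ref{lem:roa} is exactly $\varphi^\top(J_\varphi^\top P+PJ_\varphi)\varphi$ with $P=\mathrm{diag}(P_1,P_2)$. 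That proof gives $J_\varphi^\top P+PJ_\varphi\preceq-c_2(\pi)P$. Hence $J_\varphi$, and with it $\bar F(\pi)+(B+L)\bar H(\pi)$, is Hurwitz automatically.
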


\begin{proof}
    From Lemma~\ref{lem:unobservability}, there always exists a vector $\pi^\star \in \mathbb{R}^{n_p}$ scaled by $\gamma$ following condition~\eqref{eq:gamma_bound} to give $\pi = \gamma \pi^\star$ that renders the pair $(\bar{F}(\pi), \bar{H}(\pi))$ observable. This allows the design of an observer as in~\eqref{eq:observer} with a gain $L$ selected to assign the eigenvalues of $\bar{F}(\pi) + (B+L)\bar{H}(\pi)$ arbitrarily such that the real parts of the eigenvalues are strictly negative, rendering the matrix Hurwitz. Consequently, according to Lemma \ref{lem:roa}, the estimation error system \eqref{eq:error_dyn_final} is LES with an \gls{roa} estimate $\Omega_c^z$. Hence, both diagonal blocks of $\tilde J_\varphi$ are Hurwitz ($A$ by Assumption~\ref{as:system_stability}, and $\bar F(\pi)+(B+L)\bar H(\pi)$ by choice of $L$). Thus, $\tilde J_\varphi$ is Hurwitz. Since the eigenvalues of $\tilde J_\varphi$ are the same as $J_\varphi$ (via a similarity transformation), $J_\varphi$ is also Hurwitz. Therefore, the system in \eqref{eq:phi_full} is LES.
\end{proof}

Thus, the existence of such a vector $\pi$ guarantees that the attack signal $a$ successfully induces local observability, satisfying Objective \ref{obj:1}. Furthermore, the established Hurwitz property of the Jacobian $\tilde J_\varphi$ ensures the local exponential stability of the augmented system \eqref{eq:phi_full}. This implies the local exponential convergence of the state estimation error system \eqref{eq:error_dyn_final}, thereby achieving both state estimation and preservation of closed-loop stability as required by Objectives \ref{obj:2} and \ref{obj:3}.

\section{Illustrative Example} \label{sec:example}
We now illustrate the design procedure on a fourth-order system, composed of a second-order plant ($n_p=2$) and a second-order controller ($n_c=2$). The source code used to generate these results can be found at {https://github.com/zmanaa/state-est-attack}.

The plant matrices \eqref{eq:plant} are given by
$
    A_p = \begin{bsmallmatrix} -6 & 2 \\ -5 & -1 \end{bsmallmatrix}
$, $B_p = \begin{bsmallmatrix} 1 \\ 1 \end{bsmallmatrix}$, and $Q_p = \frac{1}{2}\begin{bsmallmatrix} 1 & 0 \\ 0 & 1 \end{bsmallmatrix}$.
Moreover, the controller dynamics \eqref{eq:controller} is defined by the state-space model $(A_c, B_c, C_c, D_c)$ with
$
    A_c = \begin{bsmallmatrix} -7 & 4 \\ -8 & -7 \end{bsmallmatrix}$,
$B_c = \begin{bsmallmatrix} 1 \\ 1 \end{bsmallmatrix}$,
$C_c = \begin{bsmallmatrix} 1 & 0 \end{bsmallmatrix}$, and
$D_c = 1.0.$ {The given matrices \(A_p\) and \(A_c\) satisfy Assumption \ref{as:system_spectra} where \(\sigma(A_p) \cap \sigma(A_c) = \varnothing\) and \(B_p C_c \neq 0\).}
These are assembled into the $n=4$-dimensional augmented system \eqref{eq:sys_dynamics_clean} where the matrix $A$ is Hurwitz by construction, with $\sigma(A) = \{-3.5\pm1.94 j, -7\pm5.66 j\}$.
\begin{figure}[t]
    \centering
    \includegraphics[width=\linewidth]{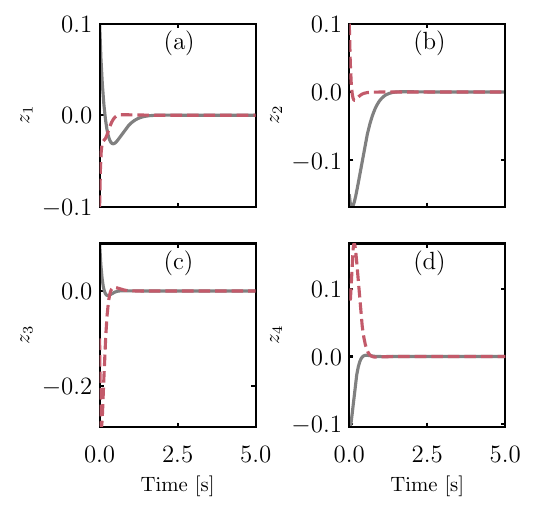}
    \caption{State estimation and the true system performance. True (\solidbox{my_gray}) and estimated (\dashboxx{my_rose}) trajectories for $z_1$ in (a), $z_2$ in (b), $z_3$ in (c), and $z_4$ in (d).}
    \label{fig:est}
\end{figure}

We choose $\pi=\gamma\pi^\star$ with $\pi^\star$ as defined in Lemma \ref{lem:unobservability} and
$\gamma$ as in \eqref{eq:gamma_bound}, with $Y = 0.2 I_n$. We find that any $\pi \neq 0$ renders the pair $(\bar F (\pi), \bar H(\pi))$ observable; hence, $\Pi_{\textrm{unobs}} = \{0\}$. We pick $\pi^\star = [1, -3]^\top \in \bbR^{n_p} \setminus \Pi_{\textrm{unobs}}$ and find $\gamma_{\max}=0.85$ according to Lemma \ref{lem:bound}. We conservatively choose $\gamma=0.9 \gamma_{\max} = 0.77$, yielding \(\pi = \gamma \pi^\star = [0.77,~-2.30]^\top\). This choice ensures that $\bar F(\pi)$ is Hurwitz and preserves the observability of $(\bar F(\pi),\bar H(\pi))$ according to Lemma \ref{lem:bound}, thereby satisfying Objective \ref{obj:1} (observability) and Objective \ref{obj:3} (stability). We then place the eigenvalues of $\bar F(\pi)+(B+L)\bar H(\pi)$ at $\{-9.5,\; -10.5,\; -11.5,\; -12.5\}$, which is the lower-right block in \eqref{eq:phi_Jacobian}, thereby satisfying Objective \ref{obj:2}.

We used the method of \cite{amato2007region} to verify that the box $\cP = [-0.5, 0.5]^8$ of initial conditions is contained in the \gls{roa}. Then we performed the simulation using the following initial conditions $z(0) = [0.1, -0.15, 0.1, -0.1]^\top$ and $\hat{z}(0) = [-0.1, 0.1, -0.1, 0.1]^\top$.
\begin{figure}
    \centering
    \includegraphics[width=\linewidth]{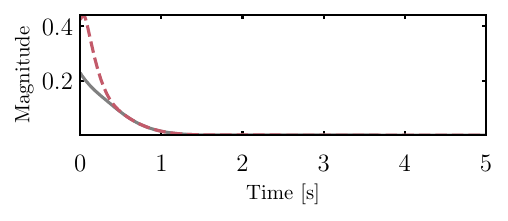}
    \caption{The norms of the state vector $\|z\|$ (\solidbox{my_gray}) and the estimation error $\|e\|$ (\dashboxx{my_rose}).}
    \label{fig:comp}
\end{figure}

The simulated performance  is shown in Figure \ref{fig:est} and Figure \ref{fig:comp}. The observed convergence of the state and the estimation error in Figure \ref{fig:comp} is consistent with the exponential upper bound established in Theorem \ref{thm:main}.

{With this example, we showed that Objectives \ref{obj:1}-\ref{obj:3} are achieved by choosing \(\pi\) according to Lemmas \ref{lem:unobservability} and \ref{lem:bound}. We also showed that the eigenvalues of the Jacobian of the observer system in the lower-right block in \eqref{eq:phi_Jacobian}, which is given by  \(\bar{F}(\pi) + (B+L)\bar{H}(\pi)\), can be chosen such that a desired local exponential decay rate \(\alpha\) is achieved.}
\section{Conclusion} \label{sec:conc}
We presented a systematic way to induce observability in closed-loop systems with linear dynamics and quadratic sensor outputs via a sensor attack. We showed that an appropriately designed attack signal can make an originally unobservable system locally observable while preserving closed-loop stability. A Luenberger-type observer was then developed to estimate both plant and controller states with guaranteed local exponential convergence and an estimate of the ROA. Future work will consider other controller structures and broader classes of nonlinear output maps.
\bibliography{ifacconf}

\appendix
\section{}
\subsection{Proof of Lemma \ref{lem:unobservability}}

\label{app:unobservability}
Starting with the \gls{pbh} test and considering the conditions derived in Corollary \ref{cor:unobs_cond}, the constraints on $\pi$ follow by considering separately the cases $w_c=0$ and $w_c\neq 0$.

\underline{Case 1: $w_c = 0$.}
Since $w \neq 0$, it follows that $w_p \neq 0$.
    {The} PBH conditions \eqref{eq:pbh_combined} reduce to  $(a) \;(sI - A_p)w_p = 0$ and $(b) \;\pi^\top Q_p w_p = 0$. Condition $(a)$ means that $s$ is an eigenvalue $\lambda_{p,i}$ of $A_p$ with eigenvector $w_{p,i}$. Condition $(b)$ then leads to the linear constraint
\[
    \pi^\top (Q_p w_{p,i}) = 0,
\]
which defines a hyperplane in $\bbR^{n_p}$ with normal vector $v_{p,i} := Q_p w_{p,i} \neq 0$.
Collecting these hyperplanes over all eigenpairs $(\lambda_{p,i},w_{p,i})$ of $A_p$ yields the set $\cH_P$ given in \eqref{eq:hyperplane_plant}.

\underline{Case 2: $w_c \neq 0$.}
From $(sI-A_c)w_c=0$, $s$ must be an eigenvalue $\lambda_{c,j}$ of $A_c$ with eigenvector $w_{c,j}$.
Substituting into the third equation in \eqref{eq:pbh_combined} gives the condition:
\begin{equation}
    (\lambda_{c,j} I - A_p)w_p = B_pC_c w_{c,j}. \label{eq:wp_sol}
\end{equation}
{Since \(\sigma(A_p)\) and \(\sigma(A_c)\) are disjoint and \(B_p C_c \neq 0\)} according to Assumption \ref{as:system_spectra}, the matrix $(\lambda_{c,j} I - A_p)$ is invertible,
so \eqref{eq:wp_sol} admits the unique nonzero solution
\[
    w_p = (\lambda_{c,j} I - A_p)^{-1} B_pC_c w_{c,j}.
\]
The remaining PBH condition $\pi^\top Q_p w_p = 0$ then defines the hyperplane
\[
    \left\{ \pi \in \bbR^{n_p} \;\middle|\;
    \pi^\top \big(Q_p(\lambda_{c,j} I - A_p)^{-1} B_pC_c w_{c,j}\big) = 0 \right\}.
\]
Taking the union over all eigenpairs $(\lambda_{c,j},w_{c,j})$ of $A_c$ yields the set $\cH_C$ given in \eqref{eq:hyperplane_controller}.

Now we prove that there always exists a \(\pi^\star \in \bbR^{n_p} \setminus \Pi_{\textrm{unobs}}\). First, note that the set of all projection vectors $\pi$ that do not induce observability, characterized by $\Pi_{\textrm{unobs}}$, is a finite union of hyperplanes in $\mathbb{R}^{n_p}$. Each hyperplane in \(\bbR^{n_p}\) is a proper subspace of dimension
\(n_p-1\). Since \(\bbR^{n_p}\) is a vector space over an infinite field, it cannot be
expressed as a finite union of proper subspaces \citep[pp.~196]{brzezinski2018galois}.

In turn, the finite union given in \eqref{eq:pi_unobs} cannot be the entire space $\bbR^{n_p}$. Therefore, the set of suitable vectors, $\Pi_{\text{obs}} = \bbR^{n_p} \setminus \Pi_{\text{unobs}}$, is always non-empty. Any vector ${\pi}^\star$ chosen from $\Pi_{\text{obs}}$ will render the system \eqref{eq:system_dyn}, \eqref{eq:attack_signal}, under Assumption \ref{as:system_spectra}, observable.

\subsection{Proof of Lemma \ref{lem:bound}} \label{app:bound}
Consider the candidate quadratic Lyapunov function \(U(z)=z^\top S z\),
where \(S\succ0\) is the solution to \({A^\top S + S A = -Y}\) with \(Y\succ0\),
which always has a solution since \(A\) is Hurwitz \cite[Thm. 4.6]{khalil_nonlinear_2002}.
The time derivative of \(U(z)\) along the solutions to ${\dot z = \bar F(\pi)z}$ is
\begin{align*}
    \dot U(z) & = z^\top\big[(A+B\bar H(\pi))^\top S + S(A+B\bar H(\pi))\big]z                \\
              & = z^\top(A^\top S + SA)z + z^\top(\bar H(\pi)^\top B^\top S + SB\bar H(\pi))z \\
              & = -z^\top Y z + z^\top(\bar H(\pi)^\top B^\top S + SB\bar H(\pi))z.
\end{align*}
Applying the Cauchy–Schwarz inequality and the sub-multiplicative property
of the induced 2-norm yields
\begin{align*}
    z^\top(\bar H(\pi)^\top B^\top S + SB\bar H(\pi))z
     & = 2z^\top (SB\bar H(\pi)) z            \\
     & \le 2\|SB\|\,\|\bar H(\pi)\|\,\|z\|^2.
\end{align*}
Substituting this bound back into the expression for \(\dot U(z)\) gives
\begin{align*}
    \dot U(z) & \le -z^\top Y z + 2\|S B\|\,\|\bar H(\pi)\|\,\|z\|^2                 \\
              & \le \big(-\lambda_{\min}(Y) + 2\|S B\|\,\|\bar H(\pi)\|\big)\|z\|^2.
\end{align*}
Therefore, \(\dot U(z)\) is negative definite and \(\bar F(\pi)\) is Hurwitz whenever \(\pi\) is chosen such that
\begin{equation}
    \label{eq:key_bound_S}
    2\|S B\|\,\|\bar H(\pi)\| \;<\; \lambda_{\min}(Y).
\end{equation}
More explicitly, since
\(\|\bar H(\pi)\|=\|[\,2\pi^\top Q_p\;0\,]\|=2\|Q_p\pi\|\), and \(\pi = \gamma \pi^\star\) according to \eqref{eq:pi_scaling},
\eqref{eq:key_bound_S} becomes
\begin{equation}
    2\|S B\|\cdot 2\gamma\|Q_p\pi^\star\| < \lambda_{\min}(Y),
\end{equation}
which is satisfied by the choice of \(\gamma\) in~\eqref{eq:gamma_bound}.

Since \(\gamma>0\) only rescales \(\pi^\star\), the vector
\(\pi=\gamma\pi^\star\) remains outside \(\Pi_{\mathrm{unobs}}\).
Therefore, the pair \((\bar F(\pi),\bar H(\pi))\) remains observable.

\subsection{Proof of Lemma \ref{lem:roa}} \label{app:roa}
We first show that there exist positive definite matrices \(P_1, P_2 \succ 0\) that satisfy \eqref{eq:roa_lyap}
for some \(W_1, W_2 \succ 0\). According to \cite[Thm. 4.6]{khalil_nonlinear_2002}, this holds when \(\bar F(\pi)\) and
\(\bar{F}(\pi)+L\bar{H}(\pi)\) are Hurwitz, which are guaranteed by Lemma \ref{lem:bound}. Then, the time derivative of \eqref{eq:roa_lap_fun} along the trajectories of \eqref{eq:phi_full} is
\begin{equation} \label{eq:lyap_deriv_correct}
    \dot{V}(\varphi) = -z^\top W_1 z - e^\top W_2 e + \Phi_{\textrm{cross}}(\varphi) + \Phi_{\textrm{honl}}(\varphi),
\end{equation}
where $\Phi_{\textrm{cross}}(\varphi)$ and $\Phi_{\textrm{honl}}(\varphi)$ are given by
\begin{align*}
    \Phi_{\textrm{cross}}(\varphi) = & \ 2z^\top P_1 B\bar{H}(\pi)e + 2z^\top \bar{H}(\pi)^\top(B+L)^\top P_2 e, \\
    \Phi_{\textrm{honl}}(\varphi) =  & \ 2z^\top P_1 B (z^\top Q z) + 2e^\top P_2(B+L)(2z^\top Q e)              \\
                                     & + 2e^\top P_2(B+L)(e^\top Q e).
\end{align*}
To bound the first two terms in~\eqref{eq:lyap_deriv_correct}, note that
\( z^\top W_1 z \ge \sigma_{\min}(W_1)\|z\|^2,\) and \( e^\top W_2 e \ge \sigma_{\min}(W_2)\|e\|^2.\)
Let $w_{\min} := \min\{\sigma_{\min}(W_1),\,\sigma_{\min}(W_2)\}$, which yields
\begin{equation}
    z^\top W_1 z + e^\top W_2 e
    \ge w_{\min}\big(\|z\|^2+\|e\|^2\big).
    \label{eq:Wmin_bound}
\end{equation}
Similarly, using the upper bound on the Lyapunov function in \eqref{eq:roa_lap_fun}, we have
\(
V(\varphi)
\le p_{\max}\big(\|z\|^2+\|e\|^2\big),\) with \(p_{\max} := \max\{\sigma_{\max}(P_1),\,\sigma_{\max}(P_2)\}.\)
Combining~\eqref{eq:Wmin_bound} with this upper bound gives
\begin{equation}
    z^\top W_1 z + e^\top W_2 e
    \ge \frac{w_{\min}}{p_{\max}}\,V(\varphi).
    \label{eq:CL_intermediate}
\end{equation}
Hence, the quadratic term in~\eqref{eq:lyap_deriv_correct} satisfies
\begin{equation}
    -\,z^\top W_1 z - e^\top W_2 e
    \le -\,c_1 V(\varphi),
    \label{eq:CL}
\end{equation} where \(c_1 := \frac{w_{\min}}{p_{\max}}\) as given in \eqref{eq:roa_constants}.

Next, we bound the remaining terms in \eqref{eq:lyap_deriv_correct}.
First, $|\Phi_{\mathrm{cross}}(\varphi)| \le c_{\mathrm{cross}}\|z\|\|e\|$,
where $c_{\mathrm{cross}} := 2\|P_1 B\bar{H}(\pi)\|$
\penalty0
$+\,2\|\bar{H}(\pi)^\top(B+L)^\top P_2\|$.
Using the relations
$\|z\| \le \sqrt{V(\varphi)/\sigma_{\min}(P_1)}$
and
$\|e\| \le \sqrt{V(\varphi)/\sigma_{\min}(P_2)}$,
we obtain
$|\Phi_{\mathrm{cross}}(\varphi)| \le c_3(\pi)V(\varphi)$,
where $c_3(\pi)$ is defined in \eqref{eq:roa_constants}.

Next, to bound the higher-order terms, we first apply norm inequalities to $\Phi_{\textrm{honl}}(\varphi)$ and obtain
\begin{align*}
    |\Phi_{\textrm{honl}}(\varphi)| \le & \ 2\|P_1 B\|\|Q\|\|z\|^3         \\
                                        & + 4\|P_2(B+L)\|\|Q\|\|z\|\|e\|^2 \\
                                        & + 2\|P_2(B+L)\|\|Q\|\|e\|^3.
\end{align*}
Using the relations $\|z\| \le \sqrt{V(\varphi)/\lambda_{\min}(P_1)}$ and $\|e\| \le \sqrt{V(\varphi)/\lambda_{\min}(P_2)}$, we substitute these into the inequality to factor out $V(\varphi)^{3/2}$
\begin{align*}
    |\Phi_{\textrm{honl}}(\varphi)| \le & \ \Bigg[ \frac{2\|P_1 B\|\|Q\|}{\lambda_{\min}(P_1)^{3/2}} + \frac{4\|P_2(B+L)\|\|Q\|}{\sqrt{\lambda_{\min}(P_1)}\lambda_{\min}(P_2)} \\
                                        & \quad + \frac{2\|P_2(B+L)\|\|Q\|}{\lambda_{\min}(P_2)^{3/2}} \Bigg] V(\varphi)^{3/2}.
\end{align*}
This simplifies to $|\Phi_{\textrm{honl}}(\varphi)| \le c_4 V(\varphi)^{3/2}$, where $c_4$ is given in \eqref{eq:roa_constants}.
Combining these bounds, we obtain
\begin{align}\label{eq:v_dot_bound}
    \dot{V}(\varphi) & \le -c_1 V(\varphi) + |\Phi_{\textrm{cross}}(\varphi)| + |\Phi_{\textrm{honl}}(\varphi)| \nonumber \\
                     & \le -c_2(\pi) V(\varphi) + c_4 V(\varphi)^{3/2}.
\end{align}

Since the trajectory starts within \(\Omega_c\), it follows that \(V(\varphi) \leq \left(\frac{c_2(\pi) - \delta}{c_4}\right)^2,\)
which in turn implies \(c_4 \sqrt{V(\varphi)} \leq c_2(\pi) - \delta.\) Substituting this into \eqref{eq:v_dot_bound} yields \(\dot V(\varphi) \leq \left(-c_2(\pi) + c_4 \sqrt{V(\varphi)} \right) V(\varphi) \leq -\delta V(\varphi)\), completing the proof.

\end{document}